\newtheorem{lemma}{Lemma}[section]
\newtheorem{theorem}{Theorem}[section]
\newtheorem{property}{Property}[section]
\newtheorem{definition}{Definition}[section]
\newtheorem{remark}{Remark}[section]
\newcommand{\RRe}{\operatorname{Re}}
\begin{document}

\title{\textbf{Cyclic Averages of Regular Polygons and \\ Platonic Solids}}
\author{\textbf{Mamuka Meskhishvili}}

\date{}
\maketitle

\begin{abstract}
The concept of the cyclic averages are introduced for a regular polygon $P_n$ and a Platonic solid $T_n$. It is shown that cyclic averages of equal powers are the same for various $P_n(T_n)$, but their number is characteristic of $P_n(T_n)$. Given the definition of a circle (sphere) by the vertices of $P_n(T_n)$ and on the base of the cyclic averages are established the common metrical relations of $P_n(T_n)$.

\vskip1em \noindent \textbf{MSC.} 51M04, 14G05

\vskip1em \noindent \textbf{Keywords and phrases.} Regular polygon, Platonic solid, circle, sphere, locus, sum of like powers, rational distances problem
\end{abstract}

\bigskip
\bigskip

\section{Introduction}
\label{sec:1}

Consider a finite set of $n$ points in the plane (space), then locus of points such that the sum of the squares of distances to the given points is constant, is a circle (sphere), whose center is at the centroid of the given points \cite{1, 2}.

Denote by $M(d_1,d_2,\dots,d_n,L)$ an arbitrary point in the plane (space) of a regular polygon (Platonic solid) of distances $d_1,d_2,\dots,d_n$ to the vertices $A_1,A_2,\dots,A_n$, then:
\begin{equation}\label{eq:*}
    \sum_{1}^n d_i^2=n(R^2+L^2),  \tag{$*$}
\end{equation}
where $R$ is the radius of circumscribed circle (sphere) of the regular polygon (Platonic solid) and $L$ is the distance between the point $M$ and the centroid $O$.

The symmetric equation exists for an equilateral triangle and an arbitrary point   \linebreak    $M(d_1,d_2,d_3,L)$ in the plane of the triangle
\begin{equation}\label{eq:**}
    3(d_1^4+d_2^4+d_3^4+a^4)=(d_1^2+d_2^2+d_3^2+a^2)^2,    \tag{$**$}
\end{equation}
where $a$ is the side of the triangle \cite{3, 4}.

The arbitrary point is always considered in the plane of the regular polygon, and in the space of the Platonic solid, respectively.

From relations \eqref{eq:*} and \eqref{eq:**} follows:
\begin{align*}
    \sum_{1}^3 d_i^2 & =3(R^2+L^2), \\
    \sum_{1}^3 d_i^4 & =3\big((R^2+L^2)^2+2R^2L^2\big).
\end{align*}
For a given equilateral triangle, the side $a$ as well as the circumradius $R$ are fixed so that,

\begin{theorem}\label{th:1.1}
The locus of points such that
$$  \sum_{1}^3 d_i^4=const       $$
is a circle, center of which is the centroid.
\end{theorem}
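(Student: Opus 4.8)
The plan is to reduce the statement to a claim about the single quantity $L$ by invoking the identity already derived in the excerpt from \eqref{eq:*} and \eqref{eq:**}, namely
\[
\sum_{1}^{3} d_i^4 = 3\big((R^2+L^2)^2 + 2R^2L^2\big),
\]
valid for any point $M(d_1,d_2,d_3,L)$ in the plane of the equilateral triangle. For a fixed triangle the circumradius $R$ is a constant, so the right-hand side depends only on $L$; writing $u:=L^2$ and expanding, it equals $3\big(u^2+4R^2u+R^4\big)$, a monic quadratic in $u$ up to the factor $3$.

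Next I would impose $\sum_{1}^{3} d_i^4 = C$ for the prescribed constant and read it as the quadratic equation $u^2+4R^2u+(R^4-C/3)=0$ in $u=L^2$. Solving gives $u=-2R^2\pm\sqrt{3R^4+C/3}$. Since $u=L^2\ge 0$ is forced, only the root with the $+$ sign is geometrically admissible, and it is admissible precisely when $\sqrt{3R^4+C/3}\ge 2R^2$, i.e.\ when $C\ge 3R^4$. Hence, whenever the locus is non-empty, every point $M$ on it satisfies
\[
L^2=\sqrt{3R^4+C/3}-2R^2=\mathrm{const},
\]
and conversely every point at that distance from the centroid $O$ satisfies the defining equation. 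Therefore the locus is exactly the circle centered at $O$ with radius $L=\sqrt{\sqrt{3R^4+C/3}-2R^2}$, which is the assertion of the theorem.

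Finally I would record the boundary behavior: when $C=3R^4$ the radius vanishes and the "circle" collapses to the single point $O$ (for which $d_1=d_2=d_3=R$), and when $C<3R^4$ the locus is empty; both cases are tacitly excluded by assuming the constant value is attained. The only actual work is the sign/root analysis showing that exactly one value of $L^2$ solves the quartic (in $L$) equation $\sum d_i^4=C$ — there is no deeper obstacle once the symmetric identity above is in hand, in contrast to the companion fact for $\sum d_i^2$ in \eqref{eq:*}, which is already linear in $L^2$ and hence immediate.
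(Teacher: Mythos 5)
Your proposal is correct and follows essentially the same route as the paper: both rest on the identity $\sum_{1}^{3} d_i^4 = 3\big((R^2+L^2)^2+2R^2L^2\big)$ obtained from \eqref{eq:*} and \eqref{eq:**}, together with the observation that $R$ is fixed, so the sum depends on $L$ alone. Your explicit quadratic-in-$L^2$ root analysis and the boundary cases $C=3R^4$ (point) and $C<3R^4$ (empty set) merely make precise what the paper leaves implicit (and records later, in Remark~\ref{rem:3.2}).
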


As we see, the distances are considered to the second and the fourth powers. Naturally, we are interested to know what happens if we consider the distances of higher powers.

\bigskip
\section{Preliminaries}
\label{sec:2}

For an equilateral triangle the expression $\sum\limits_{1}^3 d_i^6$, contains $\alpha$ -- the angle between $R$ and $L$, so the locus is not a circle, but for a square case the answer is surprising: the locus of points such that
$$  \sum_{1}^4 d_i^6=const        $$
is a circle.

Generally, the locus is a circle (sphere) if and only if the sum of power distances can be expressed in terms of $L$ and some fixed element (with length) of a given regular polygon (Platonic solid). The fixed element is possible to express in terms of $R$, so we denote such sums by the symbol $\sum_{[R,L]}$, or $\sum_{[R,L]}^{(2m)}$ -- to indicate the like powers of the distances.

Denote by $P_n(R)$ and $T_n(R)$ a regular polygon and Platonic solid, respectively, with an $n$ number of the vertices and circumscribed radius $R$. The value of the $\sum_{[R,L]}$ remains constant when the point $M$ moves on the circle $C(O,L)$  (sphere $S(O,L))$. So,

\begin{definition}\label{def:2.1}
$\sum_{[R,L]}$ -- is the sum of like powers of the distances $d_1,\dots,d_n$ from an arbitrary point $M(d_1,\dots,d_n,L)$ to the vertices $P_n(R)$ $(T_n(R))$ the value of which is constant for any point of the $C(O,L)$ $(S(O,L))$.
\end{definition}

It is clear, the sum of odd power contains radicals and never will be $\sum_{[R,L]}$.

For establishing common properties of the $P_n$ and $T_n$ discussing average of $\sum_{[R,L]}$ is much preferred
$$  S_n^{(2m)}=\frac{1}{n}\sum\nolimits_{[R,L]}^{(2m)}.        $$

\begin{definition}\label{def:2.2}
The cyclic averages $S_n^{(2m)}$ $(S_{[n]}^{(2m)})$ of a regular polygon (Platonic solid) is the average of the sum $\sum_{[R,L]}^{(2m)}$.
\end{definition}

We call such averages the cyclic averages, because as we prove the cyclic averages of equal powers of various $P_n$ $(T_n)$ for fixed $R$ and $L$ are the same (if they exist):
$$  S_{n_1}^{(2m)}=S_{n_2}^{(2m)},      $$
if $n_1\leq n_2$.

On the other hand for any given $P_n$ the number of $S_n^{(2m)}$ (as well as $\sum_{[R,L]}^{(2m)})$ is defined uniquely, so the number of the cyclic averages is characteristic of the regular polygon.

For example, 2 cyclic averages exist for a regular 3-gon:
$$  S_3^{(2)} \;\;\text{and}\;\; S_3^{(4)},     $$
while for a regular 4-gon -- 3 cyclic averages:
$$  S_4^{(2)},\;S_4^{(4)} \;\;\text{and}\;\; S_4^{(6)}.     $$
They are in relations:
$$  S_3^{(2)}=S_4^{(2)} \;\;\text{and}\;\; S_3^{(4)}=S_4^{(4)}.     $$

To demonstrate the efficiency of cyclic averages the analogue of the relation \eqref{eq:**} will be obtained for the square. Firstly, we turn \eqref{eq:**} in terms of $R$ and the cyclic averages -- $S_3^{(2)}$, $S_3^{(4)}$:
$$  \frac{d_1^4+d_2^4+d_3^4}{3}+3R^4=\Big(\frac{d_1^2+d_2^2+d_3^2}{3}+R^2\Big)^2,       $$
then replace with
$$  S_3^{(2)}=S_4^{(2)}, \;\; S_3^{(4)}=S_4^{(4)} \;\;\text{and}\;\; R=\frac{a}{\sqrt{2}}\,;        $$
we get

\begin{theorem}\label{th:2.1}
For an arbitrary point $M(d_1,\dots,d_4,L)$ in the plane of a square:
$$  4(d_1^4+d_2^4+d_3^4+d_4^4+3a^4)=(d_1^2+d_2^2+d_3^2+d_4^2+2a^2)^2,       $$
where $a$ is the side of the square.
\end{theorem}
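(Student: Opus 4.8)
The plan is to carry out the substitution indicated just before the statement, making its two inputs precise. First I would record the cyclic averages that are needed. By \eqref{eq:*}, $\sum_{1}^{n} d_i^2 = n(R^2+L^2)$, so $S_n^{(2)} = R^2+L^2$ for every admissible $n$; in particular $S_3^{(2)} = S_4^{(2)} = R^2+L^2$. By \eqref{eq:**} together with \eqref{eq:*} one gets, as already noted in the Introduction, $\sum_{1}^{3} d_i^4 = 3\big((R^2+L^2)^2 + 2R^2L^2\big)$, hence $S_3^{(4)} = (R^2+L^2)^2 + 2R^2L^2$; and by the relation $S_3^{(4)} = S_4^{(4)}$ recorded in Section~\ref{sec:2} the fourth cyclic average of the square has the same value.

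Next I would rewrite \eqref{eq:**} as the compact identity
\[
S_3^{(4)} + 3R^4 = \big(S_3^{(2)} + R^2\big)^2 ,
\]
checking that after inserting the formulas above both sides equal $4R^4 + 4R^2L^2 + L^4$. This identity is assembled only from the quantities $S^{(2)}$, $S^{(4)}$, and $R$, each of which takes the same value for the triangle and for the square; hence it persists with the square's data,
\[
S_4^{(4)} + 3R^4 = \big(S_4^{(2)} + R^2\big)^2 .
\]

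Finally I would substitute the square's concrete data. Writing $S_4^{(2)} = \tfrac14\sum_{1}^{4} d_i^2$, $S_4^{(4)} = \tfrac14\sum_{1}^{4} d_i^4$, and using $R = a/\sqrt{2}$, i.e. $R^2 = a^2/2$ and $R^4 = a^4/4$, the displayed relation becomes $\tfrac14\sum_{1}^{4} d_i^4 + \tfrac34 a^4 = \big(\tfrac14\sum_{1}^{4} d_i^2 + \tfrac12 a^2\big)^2$. The right-hand side collapses to $\tfrac1{16}\big(\sum_{1}^{4} d_i^2 + 2a^2\big)^2$, and multiplying through by $16$ yields exactly $4\big(\sum_{1}^{4} d_i^4 + 3a^4\big) = \big(\sum_{1}^{4} d_i^2 + 2a^2\big)^2$, which is the assertion.

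The only genuinely substantive ingredient is the equality $S_3^{(4)} = S_4^{(4)}$ --- equivalently, that $\sum_{1}^{4} d_i^4$ taken over the vertices of a square depends on $M$ only through $L$. Should one want the argument self-contained, this is where the work lies: put the vertices at angles $\phi_k = k\pi/2$ on the circle of radius $R$ and $M$ at distance $L$, angle $\theta$, use $d_k^2 = R^2 + L^2 - 2RL\cos(\theta-\phi_k)$, expand $\sum d_k^4$, and note that $\sum_k \cos(\theta-\phi_k) = 0$ and $\sum_k \cos 2(\theta-\phi_k) = 0$ since the $\phi_k$ (resp. $2\phi_k$) are equally spaced. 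What remains is $\sum_{1}^{4} d_k^4 = 4\big((R^2+L^2)^2 + 2R^2L^2\big) = 4 S_3^{(4)}$. I expect this trigonometric reduction, not the algebraic bookkeeping above, to be the main obstacle.
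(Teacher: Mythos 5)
Your proposal is correct and follows essentially the same route as the paper: rewrite \eqref{eq:**} as $S_3^{(4)}+3R^4=\big(S_3^{(2)}+R^2\big)^2$, transfer it to the square via $S_3^{(2)}=S_4^{(2)}$, $S_3^{(4)}=S_4^{(4)}$, and substitute $R=a/\sqrt{2}$. Your closing sketch verifying directly that $\sum_{1}^{4}d_i^4=4\big((R^2+L^2)^2+2R^2L^2\big)$ is precisely the specialization of the paper's later Theorem~\ref{th:3.1} (via Lemmas~\ref{lem:3.1} and \ref{lem:3.2}) to $n=4$, $m=2$, so it supplies the one ingredient the paper defers to Section~\ref{sec:3}.
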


\bigskip
\section{Circle as Locus of Constant $\sum_{[R,L]}$ Sums}
\label{sec:3}

\begin{theorem}\label{th:3.1}
For an arbitrary point $M(d_1,d_2,\dots,d_n,L)$ in the plane of regular polygon $P_n(R)$:
$$  \sum_{i=1}^n d_i^{2m}=n\bigg[(R^2+L^2)^m+\sum_{k=1}^{\lfloor\frac{m}{2}\rfloor} \binom{m}{2k}\binom{2k}{k} R^{2k}L^{2k}(R^2+L^2)^{m-2k}\bigg],     $$
where $m=1,\dots,n-1$.
\end{theorem}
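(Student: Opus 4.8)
The plan is to place the vertices of the regular $n$-gon on a circle of radius $R$ centered at the origin $O$, write the arbitrary point $M$ in the plane at distance $L$ from $O$, and use the law of cosines to express each $d_i^2$, then sum the $2m$-th powers using roots of unity.

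Let me set up: vertices $A_j = R(\cos\theta_j, \sin\theta_j)$ where $\theta_j = \phi_0 + 2\pi j/n$, and $M = (L\cos\psi, L\sin\psi)$. Then by the law of cosines, $d_j^2 = R^2 + L^2 - 2RL\cos(\theta_j - \psi)$. Using complex numbers, I'd write $d_j^2 = |z - w_j|^2$ where $w_j = R e^{i\theta_j}$ are the scaled $n$-th roots of unity (times $e^{i\phi_0}$) and $z = Le^{i\psi}$. So $d_j^2 = (z - w_j)(\bar z - \bar w_j)$.

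Then $d_j^{2m} = (z-w_j)^m(\bar z - \bar w_j)^m$, and I need $\sum_{j=1}^n d_j^{2m}$. Expand $(z-w_j)^m = \sum_{p=0}^m \binom{m}{p}(-1)^p z^{m-p}w_j^p$ and similarly for the conjugate with index $q$. Multiplying and summing over $j$:
$$\sum_{j=1}^n d_j^{2m} = \sum_{p=0}^m\sum_{q=0}^m \binom{m}{p}\binom{m}{q}(-1)^{p+q} z^{m-p}\bar z^{m-q}\sum_{j=1}^n w_j^p\bar w_j^q.$$
Now $w_j = R\zeta^j e^{i\phi_0}$ with $\zeta = e^{2\pi i/n}$, so $w_j^p\bar w_j^q = R^{p+q}e^{i\phi_0(p-q)}\zeta^{j(p-q)}$, and $\sum_{j=1}^n \zeta^{j(p-q)} = n$ if $n \mid (p-q)$ and $0$ otherwise. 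Since $0 \le p,q \le m \le n-1$, we have $|p-q| \le n-1 < n$, so the only surviving terms are $p = q$. This is the crucial point where the hypothesis $m \le n-1$ enters — it kills all the cross terms and makes the answer independent of $\phi_0$ and $\psi$, confirming the "$\sum_{[R,L]}$" property. So the sum collapses to
$$\sum_{j=1}^n d_j^{2m} = n\sum_{p=0}^m \binom{m}{p}^2 R^{2p}(z\bar z)^{m-p} = n\sum_{p=0}^m \binom{m}{p}^2 R^{2p}L^{2(m-p)}.$$

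**The remaining step** is purely algebraic: show that $\sum_{p=0}^m \binom{m}{p}^2 R^{2p}L^{2(m-p)}$ equals the stated expression $(R^2+L^2)^m + \sum_{k=1}^{\lfloor m/2\rfloor}\binom{m}{2k}\binom{2k}{k}R^{2k}L^{2k}(R^2+L^2)^{m-2k}$. I would prove this identity by expanding the right-hand side: write $(R^2+L^2)^{m-2k} = \sum_j \binom{m-2k}{j}R^{2j}L^{2(m-2k-j)}$, collect the coefficient of $R^{2p}L^{2(m-p)}$ on both sides, and verify the combinatorial identity $\binom{m}{p}^2 = \sum_{k\ge 0}\binom{m}{2k}\binom{2k}{k}\binom{m-2k}{p-k}$ (the $k=0$ term being $\binom{m}{p}$, coming from the $(R^2+L^2)^m$ piece). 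This identity is classical — it follows, e.g., from comparing coefficients in $(1+x)^m(1+x)^m = (1+x)^{2m}$ after a suitable substitution, or by a direct counting argument, or by recognizing $\sum_p \binom{m}{p}^2 t^p$ as a shifted Legendre-type polynomial. The main obstacle, if any, is getting this last combinatorial identity cleanly; everything else (the geometry, the roots-of-unity cancellation) is routine once the setup is fixed. An alternative that avoids the explicit identity: observe that $\sum_{p=0}^m \binom{m}{p}^2 x^p y^{m-p}$ is the coefficient extraction $[t^m]\,(1+xt)^m(y+t)^m$ — wait, more simply it is the constant-term / diagonal of $(x+y)^{2m}$-type expressions — and massage it into the closed form by substituting $x = R^2$, $y = L^2$ and using $(x+y)^m$ as the base; I would present whichever derivation is shortest.
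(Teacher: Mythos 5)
Your proof is correct, but it is organized differently from the paper's. The paper also starts from the law of cosines, writing $d_i^2=A-B\cos(\cdot)$ with $A=R^2+L^2$, $B=2RL$, but then it expands $(A-B\cos\theta)^m$ binomially in powers of $\cos\theta$ and evaluates each cosine power sum by two lemmas: a geometric-series lemma showing $\sum_{k=1}^n\cos\big(m'(\alpha-(k-1)\tfrac{2\pi}{n})\big)=0$ for $m'<n$, and the power-reduction formula for $\cos^{m'}\theta$, which makes odd powers vanish and each even power contribute $n\binom{2k}{k}/2^{2k}$ — so the coefficients $\binom{m}{2k}\binom{2k}{k}R^{2k}L^{2k}$ of the stated formula appear directly. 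You instead keep everything in complex coordinates, expand $(z-w_j)^m(\bar z-\bar w_j)^m$ as a double binomial sum, and let roots-of-unity orthogonality kill all terms with $p\neq q$ (using exactly the same hypothesis $m\le n-1$ that powers the paper's Lemma 3.1); this is cleaner on the trigonometric side and yields the intermediate formula $\sum_i d_i^{2m}=n\sum_{p=0}^m\binom{m}{p}^2R^{2p}L^{2(m-p)}$, which is arguably simpler than the stated one. The price is the closing combinatorial identity $\binom{m}{p}^2=\sum_{k\ge0}\binom{m}{2k}\binom{2k}{k}\binom{m-2k}{p-k}$, which you assert as classical with a somewhat scattered sketch; it is indeed true and easy — e.g.\ count pairs of $p$-subsets of $\{1,\dots,m\}$ according to half the size of their symmetric difference, or note that $\sum_p\binom{m}{p}^2x^py^{m-p}$ is the constant term of $\big((x+y)+xyt+t^{-1}\big)^m$ and apply the multinomial theorem — though your specific suggestion of comparing coefficients in $(1+x)^m(1+x)^m=(1+x)^{2m}$ does not by itself deliver it, so that step should be replaced by one of the working arguments. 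In short: the paper front-loads the combinatorics into the power-reduction formula, while you front-load the cancellation into roots-of-unity orthogonality and defer the combinatorics to a standard binomial identity; both are valid and of comparable length.
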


First we need to prove two lemmas.

\begin{lemma}\label{lem:3.1}
For arbitrary positive integers $m$ and $n$, such that $m<n$, the following condition
$$  \sum_{k=1}^n \cos\bigg(m\Big(\alpha-(k-1)\,\frac{2\pi}{n}\Big)\bigg)=0        $$
is satisfied, where $\alpha$ is an arbitrary angle.
\end{lemma}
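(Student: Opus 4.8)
The plan is to pass to complex exponentials and recognize the inner sum as a geometric series over the $n$-th roots of unity. Writing $\cos\varphi=\RRe\bigl(e^{i\varphi}\bigr)$ and pulling the factor $e^{im\alpha}$ out front, the sum in the statement equals
$$\RRe\!\left(e^{im\alpha}\sum_{k=1}^{n} e^{-im(k-1)\frac{2\pi}{n}}\right)=\RRe\!\left(e^{im\alpha}\sum_{j=0}^{n-1}\omega^{\,j}\right),\qquad \omega:=e^{-\frac{2\pi i m}{n}}.$$

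First I would check that $\omega\neq 1$: since $1\le m\le n-1$, the quotient $m/n$ lies strictly between $0$ and $1$, so $e^{-2\pi i m/n}\neq 1$. Then the finite geometric series evaluates to $\sum_{j=0}^{n-1}\omega^{\,j}=\dfrac{\omega^{\,n}-1}{\omega-1}$, and $\omega^{\,n}=e^{-2\pi i m}=1$ because $m$ is an integer; hence the numerator vanishes, the bracket is $0$, and taking real parts gives the claimed identity for every $\alpha$.

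The condition $m<n$ is used in exactly one place — it is what guarantees $\omega\neq 1$, so that the geometric-sum formula is legitimate and the cancellation among the $n$-th roots of unity goes through; everything else is routine. Equivalently, I could quote the closed form $\sum_{k=0}^{n-1}\cos(\theta+k\phi)=\dfrac{\sin(n\phi/2)}{\sin(\phi/2)}\cos\!\bigl(\theta+(n-1)\phi/2\bigr)$ with $\theta=m\alpha$ and $\phi=-2\pi m/n$: the numerator becomes $\sin(-\pi m)=0$, while the denominator $\sin(-\pi m/n)$ is nonzero precisely because $0<m/n<1$. I do not expect a genuine obstacle here; the only point demanding care is not dividing by zero, i.e.\ keeping track of the hypothesis on $m$.
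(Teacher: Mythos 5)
Your argument is correct and is essentially the paper's own proof: both pass to complex exponentials, factor out $e^{im\alpha}$, sum the resulting geometric series in $e^{-2\pi i m/n}$, and use $m<n$ only to ensure the ratio is not $1$ before taking real parts. Nothing to add.
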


Denote
$$  T=e^{im\alpha}+e^{im(\alpha-\frac{2\pi}{n})}+e^{im(\alpha-2\,\frac{2\pi}{n})}+\cdots+e^{im(\alpha-(n-1)\,\frac{2\pi}{n})}.     $$
The real part of $T$ is
$$  \RRe(T)=\sum_{k=1}^n \cos\bigg(m\Big(\alpha-(k-1)\,\frac{2\pi}{n}\Big)\bigg).      $$

The formula of the sum of geometric progression gives
\begin{gather*}
    T=e^{im\alpha}\bigg(1+e^{-im\,\frac{2\pi}{n}}+\big(e^{-im\,\frac{2\pi}{n}}\big)^2+\cdots+\big(e^{-im\,\frac{2\pi}{n}}\big)^{n-1}\bigg)=
                e^{im\alpha}\,\frac{1-(e^{-im\,\frac{2\pi}{n}})^n}{1-e^{-im\,\frac{2\pi}{n}}}\,, \\[0.5cm]
    e^{-im\,2\pi}=\cos(-2\pi m)+i\sin(-2\pi m)=1.
\end{gather*}

Since $m<n$, $e^{-im\,\frac{2\pi}{n}}\neq 1$. So $T=0$, i.e. $\RRe(T)=0$, which proves Lemma \ref{lem:3.1}.

\begin{remark}\label{rem:3.1}
If $m\geq n$, the sum always contains $\alpha$.
\end{remark}

\begin{lemma}\label{lem:3.2}
For arbitrary positive integers $m$ and $n$, such that $m<n$ and for an arbitrary angle $\alpha$ the following conditions are satisfied:
\begin{enumerate}
\item[] if $m$ is odd

$$  \sum_{k=1}^n \cos^m\Big(\alpha-(k-1)\,\frac{2\pi}{n}\Big)=0;     $$

\item[] if $m$ is even
$$  \sum_{k=1}^n \cos^m\Big(\alpha-(k-1)\,\frac{2\pi}{n}\Big)=n\,\frac{\binom{m}{\frac{m}{2}}}{2^m}\,.     $$
\end{enumerate}
\end{lemma}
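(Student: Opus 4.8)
The plan is to reduce everything to Lemma \ref{lem:3.1} via the standard power-reduction identity for $\cos^m$. Writing $\theta_k=\alpha-(k-1)\,\frac{2\pi}{n}$ and using $\cos\theta=\tfrac12\big(e^{i\theta}+e^{-i\theta}\big)$, the binomial theorem gives
$$\cos^m\theta_k=\frac{1}{2^m}\sum_{j=0}^{m}\binom{m}{j}e^{i(m-2j)\theta_k},$$
and since the left-hand side is real (equivalently, by pairing the index $j$ with $m-j$) this equals $\frac{1}{2^m}\sum_{j=0}^{m}\binom{m}{j}\cos\big((m-2j)\theta_k\big)$. Summing over $k$ and interchanging the two finite sums, the whole problem reduces to evaluating $\sum_{k=1}^{n}\cos\big((m-2j)\theta_k\big)$ for each $j\in\{0,1,\dots,m\}$.

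Next I would split according to the sign of $m-2j$. Because cosine is even, $\cos\big((m-2j)\theta_k\big)=\cos\big(|m-2j|\,\theta_k\big)$, and whenever $m-2j\neq 0$ the integer $|m-2j|$ is positive and satisfies $|m-2j|\le m<n$, so Lemma \ref{lem:3.1} applies with $|m-2j|$ playing the role of its ``$m$'' and yields $\sum_{k=1}^{n}\cos\big((m-2j)\theta_k\big)=0$. The only term that can survive is the one with $m-2j=0$, i.e. $j=m/2$, in which case every summand is $\cos 0=1$ and the inner sum equals $n$.

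Finally I would assemble the two cases. If $m$ is odd, $m-2j$ is odd for every $j$ and hence never zero, so all the inner sums vanish and the total is $0$. If $m$ is even, the index $j=m/2$ is admissible and contributes the coefficient $\frac{1}{2^m}\binom{m}{m/2}$ times $n$, while every other term vanishes; this gives the stated value $n\,\binom{m}{m/2}/2^{m}$. I do not anticipate a real obstacle here: the only point that needs care is verifying the hypothesis $|m-2j|<n$ of Lemma \ref{lem:3.1} for all relevant $j$ (immediate from $m<n$) and remembering to fold the terms with $m-2j<0$ back to positive multiples of $\theta_k$ before invoking that lemma.
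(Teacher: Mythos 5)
Your proposal is correct and follows essentially the same route as the paper: both reduce $\cos^m\theta_k$ to a linear combination of $\cos\big((m-2j)\theta_k\big)$ (you derive this via the binomial expansion of $\big(\tfrac12(e^{i\theta}+e^{-i\theta})\big)^m$, the paper quotes the power-reduction formula, which is the same identity with the $j$ and $m-j$ terms folded together), and then apply Lemma \ref{lem:3.1} to kill every nonconstant frequency since $|m-2j|\le m<n$, leaving only the $j=m/2$ constant term when $m$ is even. Your explicit check that negative frequencies fold back by evenness of cosine, and that the hypothesis of Lemma \ref{lem:3.1} is met, is a slightly more careful write-up of the same argument.
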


When $m$ is odd, using the power-reduction formula for cosine
$$  \cos^m\theta=\frac{2}{2^m} \sum_{k=0}^{\frac{m-1}{2}} \binom{m}{k} \cos\big((m-2k)\theta\big),        $$
\allowdisplaybreaks
we obtain
\begin{align*}
    \sum_{k=1}^n \cos^m & \Big(\alpha-(k-1)\,\frac{2\pi}{n}\Big) \\[0.2cm]
    & =\cos^m\alpha+\cos^m\Big(\alpha-\frac{2\pi}{n}\Big)+\cdots+\cos^m\Big(\alpha-(n-1)\,\frac{2\pi}{n}\Big) \\[0.2cm]
    & =\frac{2}{2^m}\,\Bigg[\binom{m}{0}\cos m\alpha+\binom{m}{1}\cos (m-2)\alpha+\cdots+\binom{m}{\frac{m-1}{2}}\cos\alpha \\[0.2cm]
    &\quad +\binom{m}{0}\cos m\Big(\alpha-\frac{2\pi}{n}\Big)+\binom{m}{1}\cos(m-2)\Big(\alpha-\frac{2\pi}{n}\Big)+\cdots \\[0.2cm]
    &\qquad\qquad\qquad\qquad +\binom{m}{\frac{m-1}{2}}\cos\Big(\alpha-\frac{2\pi}{n}\Big)+\;\cdots\; \\[0.2cm]
    &\quad +\binom{m}{0}\cos m\Big(\alpha\!-\!(n\!-\!1)\,\frac{2\pi}{n}\Big)\!+\!\binom{m}{1}\cos(m\!-\!2)\Big(\alpha\!-\!(n\!-\!1)\,\frac{2\pi}{n}\Big) \\[0.2cm]
    &\qquad\qquad\quad\;\;\; +\cdots+\binom{m}{\frac{m-1}{2}}\cos\Big(\alpha-(n-1)\,\frac{2\pi}{n}\Big)\Bigg] \\[0.2cm]
    & =\frac{2}{2^m}\,\Bigg[\binom{m}{0}\bigg(\cos m\alpha+\cos m\Big(\alpha-\frac{2\pi}{n}\Big)+\cdots+\cos m\Big(\alpha-(n-1)\,\frac{2\pi}{n}\Big)\bigg) \\[0.2cm]
    &\qquad +\binom{m}{1}\bigg(\cos(m\!-\!2)\alpha\!+\!\cos(m\!-\!2)\Big(\alpha\!-\!\frac{2\pi}{n}\Big)\!+\cdots \\[0.2cm]
    &\qquad\qquad\qquad\qquad\qquad +\cos(m-2)\Big(\alpha-(n-1)\,\frac{2\pi}{n}\Big)\bigg)+\cdots \\[0.2cm]
    &\qquad +\binom{m}{\frac{m-1}{2}}\bigg(\cos\alpha+\cos\Big(\alpha-\frac{2\pi}{n}\Big)+\cdots+\cos\Big(\alpha-(n-1)\,\frac{2\pi}{n}\Big)\bigg)\Bigg].
\end{align*}
Since $m<n$, from Lemma~\ref{lem:3.1} it follows that each sum equals zero, which proves the first part of Lemma \ref{lem:3.2}.

When $m$ is even, the power-reduction formula for cosine is
$$  \cos^m\theta=\frac{1}{2^m}\,\binom{m}{\frac{m}{2}}+\frac{2}{2^m} \sum_{k=0}^{\frac{m}{2}-1}\binom{m}{k}\cos\big((m-2k)\theta\big).       $$
Analogously to the case with odd $m$, the sum of the second addenda vanishes, and since the number of the first addenda is $n$, the total sum equals
$$  n\,\binom{m}{\frac{m}{2}}{2^m}\,,     $$
which proves Lemma \ref{lem:3.2}.

\begin{proof}[Proof of Theorem \ref{th:3.1}]
We introduce the new notations
$$  A=R^2+L^2 \;\;\text{and}\;\; B=2RL.     $$
Then
\begin{multline*}
    \sum_{i=1}^n d_i^{2m}=(A-B\cos\alpha)^m+\bigg(A-B\cos\Big(\frac{2\pi}{n}-\alpha\Big)\bigg)^m \\
    +\bigg(A-B\cos\Big(2\cdot\frac{2\pi}{n}-\alpha\Big)\bigg)^m+\cdots+\bigg(A-B\cos\Big((n-1)\,\frac{2\pi}{n}-\alpha\Big)\bigg)^m.
\end{multline*}
If $m=1$, by Lemma~\ref{lem:3.1} we have
\begin{multline*}
    \sum_{i=1}^n d_i^{2m}=(A-B\cos\alpha) \\
    +\bigg(A-B\cos\Big(\frac{2\pi}{n}-\alpha\Big)\bigg)+\cdots+\bigg(A-B\cos\Big((n-1)\cdot\frac{2\pi}{n}-\alpha\Big)\bigg)=nA.
\end{multline*}
Therefore
$$  S_n^{(2)}=R^2+L^2.      $$
If $m>1$, we have
\allowdisplaybreaks[0]
\begin{align*}
    \sum_{i=1}^n d_i^{2m} & =nA^m-\binom{m}{1}A^{m-1}B\bigg(\cos\alpha+\cos\Big(\frac{2\pi}{n}-\alpha\Big)+\cdots+\cos\Big((n-1)\,\frac{2\pi}{n}-\alpha\Big)\bigg) \\[0.2cm]
    &\qquad +\binom{m}{2} A^{m-2}B^2\bigg(\cos^2\alpha+\cos^2\Big(\frac{2\pi}{n}-\alpha\Big)+\cdots+\cos^2\Big((n-1)\,\frac{2\pi}{n}-\alpha\Big)\bigg) \\[0.2cm]
    &\qquad -\binom{m}{3}A^{m-3}B^3\bigg(\cos^3\alpha+\cos^3\Big(\frac{2\pi}{n}-\alpha\Big)+\cdots+\cos^3\Big((n-1)\,\frac{2\pi}{n}-\alpha\Big)\bigg)+\cdots \\[0.2cm]
    &\qquad \pm\binom{m}{m}B^m\bigg(\cos^m\alpha+\cos^m\Big(\frac{2\pi}{n}-\alpha\Big)+\cdots+\cos^m\Big((n-1)\,\frac{2\pi}{n}-\alpha\Big)\bigg).
\end{align*}
According to Lemma \ref{lem:3.2}, all sums with the negative sign vanish and only the sums with the positive sign remain.

If $m$ is even
\begin{align*}
    \sum_{i=1}^n d_i^{2m} & =nA^m+\binom{m}{2}A^{m-2}B^2\bigg(\cos^2\alpha+\cos^2\Big(\frac{2\pi}{n}-\alpha\Big)+\cdots+\cos^2\Big((n-1)\,\frac{2\pi}{n}-\alpha\Big)\bigg)+\cdots\\[0.2cm]
    &\qquad\quad\;\, +\binom{m}{m}B^m\bigg(\cos^m\alpha+\cos^m\Big(\frac{2\pi}{n}-\alpha\Big)+\cdots+\cos^m\Big((n-1)\,\frac{2\pi}{n}-\alpha\Big)\bigg) \\[0.2cm]
    & =n\Bigg(A^m+\sum_{k=1}^{\frac{m}{2}} \binom{m}{2k}A^{m-2k}B^{2k}\,\frac{1}{2^{2k}} \binom{2k}{k}\Bigg).
\end{align*}
If $m$ is odd
\begin{align*}
    \sum_{i=1}^n d_i^{2m} & =nA^m+\binom{m}{2}A^{m-2}B^2\bigg(\cos^2\alpha+\cos^2\Big(\frac{2\pi}{n}-\alpha\Big)+\cdots+\cos^2\Big((n-1)\,\frac{2\pi}{n}-\alpha\Big)\bigg)+\cdots \\[0.2cm]
    &\qquad\quad\;\, +\binom{m}{m-1}AB^{m-1}\bigg(\cos^{m-1}\alpha+\cos^{m-1}\Big(\frac{2\pi}{n}-\alpha\Big) \\[0.2cm]
    &\qquad\qquad\qquad\qquad\qquad\qquad\qquad\qquad\qquad +\cdots+\cos^{m-1}\Big((n-1)\,\frac{2\pi}{n}-\alpha\Big)\bigg) \\[0.2cm]
    & =n\Bigg(A^m+\sum_{k=1}^{\frac{m-1}{2}} \binom{m}{2k} A^{m-2k}B^{2k}\,\frac{1}{2^{2k}}\binom{2k}{k}\Bigg).
\end{align*}

Using the floor function (the integer part), the obtained results can be combined into a single formula as follows
$$  \sum_{i=1}^n d_i^{2m}=n\bigg(A^m+\sum_{k=1}^{\lfloor\frac{m}{2}\rfloor} \binom{m}{2k} A^{m-2k}B^{2k}\,\frac{1}{2^{2k}}\,\binom{2k}{k}\bigg),     $$
which proves the theorem.
\end{proof}

From Theorem \ref{th:3.1} each sums
$$  \sum_{i=1}^n d_i^{2m}, \;\;\text{where}\;\; m=1,2,\dots,n-1     $$
are the $\sum_{[R,L]}$ sums. Beginning from the $m\geq n$ all sums of power distances contain $\alpha$    \linebreak    (Remark~\ref{rem:3.1}).

For example for $P_3$ the sums contain:
\begin{enumerate}
\item[-] $\cos 3\alpha$, if $m=3,4,5$;

\item[-] $\cos 3\alpha$ and $\cos 6\alpha$, if $m=6,7,8$;

\item[-] $\cos 3\alpha$, $\cos 6\alpha$ and $\cos 9\alpha$, if $m=9,10,11$.
\end{enumerate}

Generally for $m\geq n$ the sums $\sum\limits_{i=1}^n d_i^{2m}$ contain cosine of the multiples of $n\alpha$. The study of such sums is beyond the scope of this article.

Therefore for $P_n$ exist an $n-1$ number of $\sum_{[R,L]}$ sums and if they are constant the locus for each case is a circle:

\begin{theorem}\label{th:3.2}
The locus of points such that the sum of the $(2m)$-th power of the distances to the vertices of a given $P_n(R)$ is constant is a circle, if
$$  \sum_{i=1}^n d_i^{2m}>nR^{2m}, \;\;\text{where}\;\; m=1,2,\dots,n-1,        $$
whose center is the centroid of the $P_n(R)$.
\end{theorem}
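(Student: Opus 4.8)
The plan is to read off from Theorem \ref{th:3.1} that the sum $\sum_{i=1}^n d_i^{2m}$ depends on the position of $M$ only through the single quantity $L=|MO|$ (with $R$ fixed for the given polygon), and then to show that this dependence is strictly monotone, so that prescribing the value of the sum pins down $L$ and hence exhibits the locus as a circle about the centroid. Concretely, set $t=L^2\ge 0$ and introduce
$$g(t)=(R^2+t)^m+\sum_{k=1}^{\lfloor m/2\rfloor}\binom{m}{2k}\binom{2k}{k}R^{2k}\,t^k(R^2+t)^{m-2k}.$$
With this notation Theorem \ref{th:3.1} states exactly that $\sum_{i=1}^n d_i^{2m}=n\,g(L^2)$ for every point $M$ in the plane of $P_n(R)$; in particular the value of the sum is constant on each circle $C(O,L)$, and equals $nR^{2m}$ precisely when $L=0$ (then each $d_i=R$). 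The genuine content here — that for $m\le n-1$ this expression carries no dependence on the angular position $\alpha$ of $M$ — has already been supplied by Lemmas \ref{lem:3.1} and \ref{lem:3.2}.

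Next I would verify that $g$ is strictly increasing and continuous on $[0,\infty)$. Since $k\le\lfloor m/2\rfloor$ forces $m-2k\ge 0$, each summand $R^{2k}t^k(R^2+t)^{m-2k}$ expands into a polynomial in $t$ with non-negative coefficients, and the factors $\binom{m}{2k}\binom{2k}{k}$ are positive; moreover $(R^2+t)^m$ has non-negative coefficients and contributes the strictly positive linear coefficient $mR^{2(m-1)}$. Hence $g$ is a polynomial of degree $m$ with $g(0)=R^{2m}$ and $g(t)-g(0)=c_1t+c_2t^2+\cdots$ with all $c_j\ge 0$ and $c_1>0$, so $0\le s<t$ gives $g(t)-g(s)\ge c_1(t-s)>0$; also $g(t)\to\infty$ as $t\to\infty$. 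Therefore $g$ is a continuous strictly increasing bijection of $[0,\infty)$ onto $[R^{2m},\infty)$.

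Finally I would conclude. Let $c$ be the prescribed constant value, with $c>nR^{2m}$. If $M$ lies on the locus, then $g(|MO|^2)=c/n>R^{2m}=g(0)$, so by the bijectivity of $g$ there is a unique $t_0>0$ with $g(t_0)=c/n$, which forces $|MO|=\sqrt{t_0}=:L_0$. Conversely, by Theorem \ref{th:3.1} every point at distance $L_0$ from $O$ satisfies $\sum_{i=1}^n d_i^{2m}=n\,g(t_0)=c$. Hence the locus is exactly the circle $C(O,L_0)$ centered at the centroid of $P_n(R)$, as claimed. (When $c=nR^{2m}$ the locus degenerates to the single point $O$, and when $c<nR^{2m}$ it is empty, which is why the hypothesis $c>nR^{2m}$ is imposed.)

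The only step with any friction is the monotonicity of $g$, and even that is immediate once one notes that all coefficients produced by the formula of Theorem \ref{th:3.1} are non-negative with a strictly positive linear term — so there is no real obstacle, the substantive work having been done in proving Theorem \ref{th:3.1}. The same argument applies verbatim to the Platonic-solid case (with the corresponding sphere formula in place of Theorem \ref{th:3.1}), yielding a sphere centered at the centroid.
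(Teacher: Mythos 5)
Your proposal is correct and follows essentially the same route as the paper, which presents Theorem \ref{th:3.2} as an immediate consequence of Theorem \ref{th:3.1} (the sum depends on the position of $M$ only through $L$, for $m\le n-1$). The only difference is that you explicitly verify the strict monotonicity of $g(t)=(R^2+t)^m+\sum_k\binom{m}{2k}\binom{2k}{k}R^{2k}t^k(R^2+t)^{m-2k}$ so that the prescribed constant determines $L$ uniquely — a detail the paper leaves implicit — and this addition is sound.
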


\begin{remark}\label{rem:3.2}
\ \ \
\begin{enumerate}
\item[-] If $\sum\limits_{i=1}^n d_i^{2m}=nR^{2m}$, the locus is the centroid of the polygon.

\item[-] If $\sum\limits_{i=1}^n d_i^{2m}<nR^{2m}$, the locus is the empty set.
\end{enumerate}
\end{remark}

\bigskip
\section{Cyclic Averages of Regular Polygons}
\label{sec:4}

The properties of the cyclic average are as follows:

\begin{property}\label{pr:4.1}
Each regular $n$-gon has an $n-1$ number of cyclic averages
$$  S_n^{(2)},S_n^{(4)},\dots,S_n^{(2n-2)}.     $$
\end{property}

\begin{property}\label{pr:4.2}
For fixed $R$ and $L$, the cyclic averages of equal powers of various regular $n$-gons are the same:
\begin{align*}
    S_3^{(2)}=S_4^{(2)}=S_5^{(2)} & =S_6^{(2)}=\cdots, \\[0.2cm]
    S_3^{(4)}=S_4^{(4)}=S_5^{(4)} & =S_6^{(4)}=\cdots, \\[0.2cm]
    S_4^{(6)}=S_5^{(6)} & =S_6^{(6)}=\cdots, \\[0.2cm]
    S_5^{(8)} & =S_6^{(8)}=\cdots\,.
\end{align*}
\end{property}

\begin{property}\label{pr:4.3}
Any relations in terms of the cyclic averages $S_{n_1}^{(2m)}$, the circumscibed radius $R$ and the distance $L$, which are satisfied for a regular $n_1$-gon, are at the same time satisfied for any regular $n_2$-gon, where $n_1\leq n_2$, i.e. $S_{n_1}^{(2m)}$ can be replaced by $S_{n_2}^{(2m)}$.
\end{property}

Eliminate $L$ from the relations of Theorem \ref{th:3.1} we obtain:

\begin{theorem}\label{th:4.1}
For any regular $n$-gon:
$$  S_n^{(2m)}=(S_n^{(2)})^m+\sum_{k=1}^{\lfloor\frac{m}{2}\rfloor} \binom{m}{2k}\binom{2k}{k} R^{2k}(S_n^{(2)}-R^2)^k(S_n^{(2)})^{m-2k},       $$
where $m=2,\dots,n-1$.
\end{theorem}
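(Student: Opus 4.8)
The plan is to derive Theorem~\ref{th:4.1} directly from Theorem~\ref{th:3.1} by a change of variables that trades the geometric parameter $L$ for the lowest cyclic average $S_n^{(2)}$. First I would divide both sides of the identity in Theorem~\ref{th:3.1} by $n$; since $m\le n-1$ the left-hand side is one of the $\sum_{[R,L]}$ sums, so $\frac1n\sum_{i=1}^n d_i^{2m}$ is precisely the cyclic average $S_n^{(2m)}$, and we obtain
$$S_n^{(2m)}=(R^2+L^2)^m+\sum_{k=1}^{\lfloor m/2\rfloor}\binom{m}{2k}\binom{2k}{k}R^{2k}L^{2k}(R^2+L^2)^{m-2k}.$$

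Next I would specialize this to $m=1$, which gives $S_n^{(2)}=R^2+L^2$ (already recorded in the proof of Theorem~\ref{th:3.1}). Solving this relation produces the two substitutions needed:
$$R^2+L^2=S_n^{(2)}\qquad\text{and}\qquad L^2=S_n^{(2)}-R^2.$$
Plugging these into the displayed formula for $S_n^{(2m)}$, the factor $(R^2+L^2)^m$ becomes $(S_n^{(2)})^m$, each factor $(R^2+L^2)^{m-2k}$ becomes $(S_n^{(2)})^{m-2k}$, and each factor $L^{2k}=(L^2)^k$ becomes $(S_n^{(2)}-R^2)^k$. This yields exactly
$$S_n^{(2m)}=(S_n^{(2)})^m+\sum_{k=1}^{\lfloor m/2\rfloor}\binom{m}{2k}\binom{2k}{k}R^{2k}(S_n^{(2)}-R^2)^k(S_n^{(2)})^{m-2k},$$
valid for $m=2,\dots,n-1$; for $m=1$ the sum is empty and the identity degenerates to the tautology $S_n^{(2)}=S_n^{(2)}$, which is why the stated range starts at $m=2$.

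There is no genuine obstacle here: the argument is a one-line elimination of $L$, and the only thing worth checking is the bookkeeping — that $L$ enters the formula of Theorem~\ref{th:3.1} only through the combinations $R^2+L^2$ and the even powers $L^{2k}$, never through an odd power of $L$ alone, so that the substitution $L^2=S_n^{(2)}-R^2$ leaves a genuine polynomial identity in $R$ and $S_n^{(2)}$ with no residual radicals. Finally, since the resulting relation is expressed solely in terms of $S_n^{(2m)}$, $R$ and $S_n^{(2)}$, Property~\ref{pr:4.3} shows it holds simultaneously for every regular $n$-gon, consistent with the universality claimed for the cyclic averages.
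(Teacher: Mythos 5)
Your proposal is correct and is essentially the paper's own argument: the paper obtains Theorem~\ref{th:4.1} precisely by eliminating $L$ from Theorem~\ref{th:3.1} via $S_n^{(2)}=R^2+L^2$, i.e.\ substituting $L^2=S_n^{(2)}-R^2$ after dividing by $n$. Your bookkeeping remarks (that $L$ appears only through $R^2+L^2$ and even powers $L^{2k}$, and that $m=1$ degenerates to a tautology) match the intended, if unstated, details.
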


In terms of $S_n^{(2)}$ and $S_n^{(4)}$:

\begin{theorem}\label{th:4.2}
For any regular $n$-gon:
$$  S_n^{(2m)}=(S_n^{(2)})^m+\sum_{k=1}^{\lfloor\frac{m}{2}\rfloor} \frac{1}{2^k}\,\binom{m}{2k}\binom{2k}{k} \big(S_n^{(4)}-(S_n^{(2)})^2\big)^k(S_n^{(2)})^{m-2k},       $$
where $m=3,\dots,n-1$.
\end{theorem}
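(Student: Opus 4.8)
The plan is to reduce everything to Theorem~\ref{th:3.1} (equivalently Theorem~\ref{th:4.1}) by re-expressing the fixed quantity $R$ through the two lowest cyclic averages $S_n^{(2)}$ and $S_n^{(4)}$. Recall the abbreviations $A=R^2+L^2$ and $B=2RL$ from the proof of Theorem~\ref{th:3.1}; there we found $S_n^{(2)}=A$, and reading off the $m=2$ instance of that theorem gives
$$S_n^{(4)}=A^2+\tfrac12 B^2,$$
so that $B^2=2\bigl(S_n^{(4)}-(S_n^{(2)})^2\bigr)$, i.e. $R^2L^2=\tfrac14B^2=\tfrac12\bigl(S_n^{(4)}-(S_n^{(2)})^2\bigr)$. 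This is a genuine identity among cyclic averages only when $S_n^{(4)}$ itself is one, that is for $n\geq 3$; the statement of the theorem is vacuous for $n=3$ and carries content precisely for $n\geq 4$ and $3\leq m\leq n-1$.

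Next I would substitute into the combined formula of Theorem~\ref{th:3.1},
$$S_n^{(2m)}=A^m+\sum_{k=1}^{\lfloor m/2\rfloor}\binom{m}{2k}\binom{2k}{k}\frac{1}{2^{2k}}\,A^{m-2k}B^{2k}.$$
Replacing $A$ by $S_n^{(2)}$ and $B^{2k}$ by $2^{k}\bigl(S_n^{(4)}-(S_n^{(2)})^2\bigr)^k$ converts the coefficient $\frac{1}{2^{2k}}$ into $\frac{1}{2^{k}}$, yielding exactly
$$S_n^{(2m)}=(S_n^{(2)})^m+\sum_{k=1}^{\lfloor m/2\rfloor}\frac{1}{2^{k}}\binom{m}{2k}\binom{2k}{k}\bigl(S_n^{(4)}-(S_n^{(2)})^2\bigr)^k(S_n^{(2)})^{m-2k},$$
as claimed. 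Alternatively one can start from Theorem~\ref{th:4.1}, whose summand carries the factor $R^{2k}(S_n^{(2)}-R^2)^k=(R^2L^2)^k$, and apply the substitution $(R^2L^2)^k=\tfrac{1}{2^{k}}\bigl(S_n^{(4)}-(S_n^{(2)})^2\bigr)^k$ directly.

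Since the whole argument is a single algebraic substitution, there is no real obstacle. The only points needing care are the bookkeeping of the powers of $2$ — the $2^{2k}$ appearing in Theorem~\ref{th:3.1} against the $2^{k}$ produced by $B^{2k}$ — and the range of $m$: the formula is a relation purely between cyclic averages only for $3\leq m\leq n-1$, because $S_n^{(4)}$ must be available ($m=2$ admissible, hence $n\geq 3$) while $S_n^{(2m)}$ must still be a $\sum_{[R,L]}$ sum ($m\leq n-1$); for $m=2$ it degenerates to the trivial $S_n^{(4)}=S_n^{(4)}$.
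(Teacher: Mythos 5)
Your proposal is correct and follows the same route the paper takes: Theorem~\ref{th:4.2} is obtained from Theorem~\ref{th:3.1} (equivalently Theorem~\ref{th:4.1}) by the substitution $S_n^{(2)}=R^2+L^2$ and $R^2L^2=\tfrac12\bigl(S_n^{(4)}-(S_n^{(2)})^2\bigr)$, i.e.\ eliminating $R$ and $L$ in favor of the two lowest cyclic averages, with the same $2^{2k}$ versus $2^{k}$ bookkeeping. Your remarks on the admissible range $3\leq m\leq n-1$ and the triviality of the $m=2$ case are accurate and consistent with the paper's statement.
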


The first two relations of Theorem \ref{th:3.1} imply:

\begin{theorem}\label{th:4.3}
For any regular $n$-gon:
\begin{align*}
    R^2 & =\frac{1}{2}\,\Big(S_n^{(2)}\pm\sqrt{3(S_n^{(2)})^2-2S_n^{(4)}}\Big), \\[0.2cm]
    L^2 & =\frac{1}{2}\,\Big(S_n^{(2)}\mp\sqrt{3(S_n^{(2)})^2-2S_n^{(4)}}\Big).
\end{align*}
\end{theorem}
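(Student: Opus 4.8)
The plan is to treat the $m=1$ and $m=2$ instances of Theorem~\ref{th:3.1} as two polynomial equations in the unknowns $R^2$ and $L^2$, and then solve the resulting symmetric system by Vieta's formulas. First I would record that for $m=1$ the formula of Theorem~\ref{th:3.1} collapses (the $k$-sum is empty) to $\sum_{i=1}^n d_i^2=n(R^2+L^2)$, hence
$$ S_n^{(2)}=R^2+L^2. $$
For $m=2$ the only surviving term in the $k$-sum is $k=1$, contributing $\binom{2}{2}\binom{2}{1}R^2L^2(R^2+L^2)^0=2R^2L^2$, so that $\sum_{i=1}^n d_i^4=n\big((R^2+L^2)^2+2R^2L^2\big)$ and therefore
$$ S_n^{(4)}=(S_n^{(2)})^2+2R^2L^2. $$

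Next I would set $u=R^2$, $v=L^2$ and read off the two elementary symmetric functions: $u+v=S_n^{(2)}$ and, from the second relation, $uv=\tfrac12\big(S_n^{(4)}-(S_n^{(2)})^2\big)$. Consequently $u$ and $v$ are the two roots of
$$ t^2-S_n^{(2)}\,t+\tfrac12\big(S_n^{(4)}-(S_n^{(2)})^2\big)=0, $$
whose discriminant equals $(S_n^{(2)})^2-2\big(S_n^{(4)}-(S_n^{(2)})^2\big)=3(S_n^{(2)})^2-2S_n^{(4)}$. Solving gives
$$ t=\tfrac12\Big(S_n^{(2)}\pm\sqrt{3(S_n^{(2)})^2-2S_n^{(4)}}\Big), $$
and since $R^2$ and $L^2$ are precisely the two real nonnegative roots and must sum to $S_n^{(2)}$, one of them carries the $+$ sign and the other the $-$ sign; this is exactly the claimed pair of formulas. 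By Property~\ref{pr:4.3} the identity holds for every $n$, with $m=2\leq n-1$ requiring $n\geq 3$, which is automatic for a regular polygon.

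The only point that needs a word of justification is that the quantity under the radical is nonnegative, and this is automatic rather than an obstacle: since $R^2$ and $L^2$ are real, $(R^2-L^2)^2\geq 0$, and a one-line computation gives $(R^2-L^2)^2=(R^2+L^2)^2-4R^2L^2=3(S_n^{(2)})^2-2S_n^{(4)}$, so in fact $\sqrt{3(S_n^{(2)})^2-2S_n^{(4)}}=|R^2-L^2|$. This also explains why the two formulas differ only by the sign of the square root and why the remaining ambiguity is a genuine labelling choice, degenerating only when $R^2=L^2$ (in particular it disappears when $L=0$, i.e. $M=O$). I do not anticipate any real difficulty: the entire argument is Vieta's formulas applied to the first two cyclic averages.
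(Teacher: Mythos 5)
Your proposal is correct and is essentially the paper's own argument: the paper derives Theorem~\ref{th:4.3} directly from the $m=1$ and $m=2$ cases of Theorem~\ref{th:3.1}, i.e.\ from $S_n^{(2)}=R^2+L^2$ and $S_n^{(4)}=(R^2+L^2)^2+2R^2L^2$, which is exactly the symmetric system you solve via Vieta's formulas. Your added observations (the discriminant equals $(R^2-L^2)^2\geq 0$ and the $\pm$ is a labelling ambiguity) are correct and only make explicit what the paper leaves implicit.
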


The points on the circumscribed circle satisfy
$$  3(S_n^{(2)})^2=2S_n^{(4)},          $$
so

\begin{theorem}\label{th:4.4}
For any point on the circumscribed circle of the regular $n$-gon:
$$  3\Big(\sum_{i=1}^n d_i^2\Big)^2=2n\sum_{i=1}^n d_i^4.     $$
\end{theorem}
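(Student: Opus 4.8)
The plan is to reduce the claim to the algebraic relation $3(S_n^{(2)})^2 = 2S_n^{(4)}$ observed just above the statement, and to verify that relation by specializing Theorem~\ref{th:3.1} (equivalently Theorem~\ref{th:4.3}) to the case $L = R$, which is exactly the condition that the point $M$ lie on the circumscribed circle $C(O,R)$.

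First I would record the two ingredients already available. From \eqref{eq:*} one has $S_n^{(2)} = R^2 + L^2$, and from Theorem~\ref{th:3.1} with $m = 2$ one has $\sum_{i=1}^n d_i^{4} = n\big((R^2+L^2)^2 + 2R^2L^2\big)$, so that $S_n^{(4)} = (R^2+L^2)^2 + 2R^2L^2$. Then I would compute the discriminant-type combination appearing in Theorem~\ref{th:4.3}:
\[
  3(S_n^{(2)})^2 - 2S_n^{(4)} = 3(R^2+L^2)^2 - 2(R^2+L^2)^2 - 4R^2L^2 = (R^2+L^2)^2 - 4R^2L^2 = (R^2 - L^2)^2 .
\]

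Next, since $M$ lies on the circumscribed circle we have $L = R$, hence $(R^2 - L^2)^2 = 0$ and therefore $3(S_n^{(2)})^2 = 2S_n^{(4)}$. Finally I would substitute $S_n^{(2)} = \tfrac1n\sum_{i=1}^n d_i^2$ and $S_n^{(4)} = \tfrac1n\sum_{i=1}^n d_i^4$ into this identity and multiply through by $n^2$, which yields $3\big(\sum_{i=1}^n d_i^2\big)^2 = 2n\sum_{i=1}^n d_i^4$, as asserted.

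There is no genuine obstacle: the only substantive step is recognizing that the quantity under the radical in Theorem~\ref{th:4.3} factors as $(R^2 - L^2)^2$ and vanishes precisely when $M$ is on $C(O,R)$; everything else is bookkeeping with the definitions of the cyclic averages. One could equally phrase the argument without mentioning averages at all, directly from \eqref{eq:*} and the $m=2$ case of Theorem~\ref{th:3.1}, but routing it through $S_n^{(2)}$ and $S_n^{(4)}$ keeps the presentation uniform with the rest of Section~\ref{sec:4}.
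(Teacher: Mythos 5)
Your proposal is correct and follows the paper's own route: the paper also derives the identity $3(S_n^{(2)})^2=2S_n^{(4)}$ on the circumscribed circle from the $m=1,2$ cases of Theorem~\ref{th:3.1} (via Theorem~\ref{th:4.3}, where the radicand $3(S_n^{(2)})^2-2S_n^{(4)}=(R^2-L^2)^2$ vanishes when $L=R$), and then clears denominators to obtain the stated relation. No gaps; the argument is essentially identical.
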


\subsection{Equilateral triangle}
\label{subsec:4.1}
\medskip

There are $2$ cyclic averages:
\begin{align*}
    S_3^{(2)} & =\frac{1}{3}\,(d_1^2+d_2^2+d_3^2)=R^2+L^2, \\[0.2cm]
    S_3^{(4)} & =\frac{1}{3}\,(d_1^4+d_2^4+d_3^4)=(R^2+L^2)^2+2R^2L^2.
\end{align*}
In general case from Theorem \ref{th:4.1}, for $n\geq 3$ \cite{5}
$$  S_n^{(4)}+3R^4=(S_n^{(2)}+R^2)^2.       $$

Denote by the symbol -- $\triangle_{(a,b,c)}$ the area of a triangle whose sides have lengths $a$, $b$, $c$. Then solution of the system of the cyclic averages is:

\begin{theorem}\label{th:4.5}
For any point $M(d_1,d_2,d_3,L)$ and $P_3(R)$
\begin{align*}
    d_1 & =d_1, \\
    d_2^2 & =\frac{1}{2}\,\Big(3(R^2+L^2)-d_1^2\pm 4\sqrt{3}\triangle_{(R,L,d_1)}\Big), \\[0.2cm]
    d_3^2 & =\frac{1}{2}\,\Big(3(R^2+L^2)-d_1^2\mp 4\sqrt{3}\triangle_{(R,L,d_1)}\Big).
\end{align*}
\end{theorem}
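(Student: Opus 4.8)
The plan is to treat $d_2^2$ and $d_3^2$ as the two unknowns and reconstruct them from their elementary symmetric functions, which are supplied by the two cyclic averages of $P_3(R)$ computed in Subsection~\ref{subsec:4.1}. From
\[
    d_1^2+d_2^2+d_3^2=3(R^2+L^2),\qquad d_1^4+d_2^4+d_3^4=3\big((R^2+L^2)^2+2R^2L^2\big),
\]
isolating the terms involving $d_1$ gives
\[
    d_2^2+d_3^2=3(R^2+L^2)-d_1^2,\qquad d_2^4+d_3^4=3\big((R^2+L^2)^2+2R^2L^2\big)-d_1^4 .
\]
Then $2d_2^2d_3^2=(d_2^2+d_3^2)^2-(d_2^4+d_3^4)$ yields a closed expression for the product $d_2^2d_3^2$ purely in terms of $R$, $L$, $d_1$. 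Hence $d_2^2$ and $d_3^2$ are precisely the two roots of
\[
    t^2-\big(3(R^2+L^2)-d_1^2\big)\,t+d_2^2d_3^2=0 ,
\]
so the quadratic formula gives $d_{2,3}^2=\tfrac12\big(3(R^2+L^2)-d_1^2\pm\sqrt{\mathcal D}\,\big)$ with $\mathcal D=\big(3(R^2+L^2)-d_1^2\big)^2-4d_2^2d_3^2$; the two choices of sign merely interchange the roles of the two remaining vertices, which is why $d_1$ is left free as a parameter.

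The only real computation is to identify $\mathcal D$ with $48\,\triangle_{(R,L,d_1)}^2$, equivalently $\sqrt{\mathcal D}=4\sqrt3\,\triangle_{(R,L,d_1)}$. Substituting the expression for $d_2^2d_3^2$ and simplifying (writing $A=R^2+L^2$ one lands quickly on $\mathcal D=12R^2L^2-3(R^2+L^2-d_1^2)^2$), and then invoking Heron's formula in the form $16\,\triangle_{(a,b,c)}^2=2a^2b^2+2b^2c^2+2c^2a^2-a^4-b^4-c^4$ with $a=R$, $b=L$, $c=d_1$, one checks that $\mathcal D=3\cdot 16\,\triangle_{(R,L,d_1)}^2$. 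This Heron identification is the crux of the argument; everything else is routine bookkeeping with symmetric functions.

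Finally I would record the geometric consistency. Since $M$ lies at distance $L$ from the centroid $O$ while $A_1$ lies at distance $R$ from $O$, the triangle inequality in $OMA_1$ forces $|R-L|\le d_1\le R+L$, which is exactly the condition that $R$, $L$, $d_1$ are the side lengths of a (possibly degenerate) triangle. Thus $\triangle_{(R,L,d_1)}$ is real, $\mathcal D\ge 0$, and the formulas always return admissible nonnegative values for $d_2^2$ and $d_3^2$. I do not anticipate a genuine obstacle: the proof is a short elimination combined with the classical Heron relation, so the write-up should be compact.
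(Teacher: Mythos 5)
Your proposal is correct and matches the paper's (implicit) argument: the paper presents Theorem~\ref{th:4.5} precisely as the solution of the system formed by the two cyclic averages $S_3^{(2)}$ and $S_3^{(4)}$, which is exactly your elimination via the sum and product of $d_2^2,d_3^2$ and the quadratic formula. Your identification of the discriminant $12R^2L^2-3(R^2+L^2-d_1^2)^2$ with $48\,\triangle_{(R,L,d_1)}^2$ by Heron's formula, together with the triangle $OMA_1$ guaranteeing nonnegativity, checks out.
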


For $P_3$
$$  3(S_n^{(2)})^2-2S_n^{(4)}=\frac{1}{3}\,\Big((d_1^2+d_2^2+d_3^2)^2-2(d_1^4+d_2^4+d_3^4)\Big)=\frac{16}{3}\,\triangle_{(d_1,d_2,d_3)}^2,        $$
and
\begin{align*}
    R^2 & =\frac{1}{6}\,\Big(d_1^2+d_2^2+d_3^2\pm 4\sqrt{3}\triangle_{(d_1,d_2,d_3)}\Big), \\[0.2cm]
    L^2 & =\frac{1}{6}\,\Big(d_1^2+d_2^2+d_3^2\mp 4\sqrt{3}\triangle_{(d_1,d_2,d_3)}\Big).
\end{align*}

For any point on the circumscribed circle, follows the area -- $\triangle_{(d_1,d_2,d_3)}$ should be zero. Indeed for the largest distance $d_3=d_1+d_2$ holds.

\subsection{Square}
\label{subsec:4.2}

There are $3$ cyclic averages:
\allowdisplaybreaks[0]
\begin{align*}
    S_4^{(2)} & =\frac{1}{4}\,(d_1^2+d_2^2+d_3^2+d_4^2)=R^2+L^2, \\[0.2cm]
    S_4^{(4)} & =\frac{1}{4}\,(d_1^4+d_2^4+d_3^4+d_4^4)=(R^2+L^2)^2+2R^2L^2, \\[0.2cm]
    S_4^{(6)} & =\frac{1}{4}\,(d_1^6+d_2^6+d_3^6+d_4^6)=(R^2+L^2)^3+6R^2L^2(R^2+L^2).
\end{align*}

From Theorems \ref{th:4.1} and \ref{th:4.2}

\begin{theorem}\label{th:4.6}
For any regular $n$-gon, where $n\geq 4$:
\begin{align*}
    S_n^{(6)} & =S_n^{(2)}\big((S_n^{(2)}+3R^2)^2-15R^4\big), \\[0.2cm]
    S_n^{(6)} & =S_n^{(2)}\big(3S_n^{(4)}-2(S_n^{(2)})^2\big).
\end{align*}
\end{theorem}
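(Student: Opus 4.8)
The plan is to derive Theorem~\ref{th:4.6} purely algebraically from the already-established Theorems~\ref{th:4.1} and~\ref{th:4.2} by specializing to $m=3$. First I would apply Theorem~\ref{th:4.1} with $m=3$. Since $\lfloor 3/2\rfloor=1$, the sum over $k$ collapses to the single term $k=1$, giving
$$
S_n^{(6)}=(S_n^{(2)})^3+\binom{3}{2}\binom{2}{1}R^2\big(S_n^{(2)}-R^2\big)(S_n^{(2)})^{1}
=(S_n^{(2)})^3+6R^2\big(S_n^{(2)}-R^2\big)S_n^{(2)}.
$$
Then I would factor out $S_n^{(2)}$ to obtain $S_n^{(6)}=S_n^{(2)}\big((S_n^{(2)})^2+6R^2S_n^{(2)}-6R^4\big)$, and it remains only to recognize the bracketed quadratic in $S_n^{(2)}$ as $(S_n^{(2)}+3R^2)^2-15R^4$, which is a one-line completion-of-the-square check: $(S_n^{(2)}+3R^2)^2-15R^4=(S_n^{(2)})^2+6R^2S_n^{(2)}+9R^4-15R^4$. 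This proves the first identity. Note $m=3$ requires $n-1\geq 3$, i.e. $n\geq 4$, matching the hypothesis.

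Next I would obtain the second identity by exactly the same specialization of Theorem~\ref{th:4.2} at $m=3$: again only $k=1$ survives, and the coefficient $\frac{1}{2^1}\binom{3}{2}\binom{2}{1}=3$ yields
$$
S_n^{(6)}=(S_n^{(2)})^3+3\big(S_n^{(4)}-(S_n^{(2)})^2\big)S_n^{(2)}
=S_n^{(2)}\big((S_n^{(2)})^2+3S_n^{(4)}-3(S_n^{(2)})^2\big)=S_n^{(2)}\big(3S_n^{(4)}-2(S_n^{(2)})^2\big),
$$
which is precisely the claimed form. Alternatively, one could derive the second identity from the first by substituting the relation $S_n^{(4)}=(S_n^{(2)}+R^2)^2-3R^4$ (equivalently $R^2(S_n^{(2)}-R^2)=\tfrac12(S_n^{(4)}-(S_n^{(2)})^2)$, which follows from $A=R^2+L^2$, $B=2RL$ in Theorem~\ref{th:3.1}) to eliminate $R$; I would include this cross-check to confirm internal consistency.

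There is essentially no obstacle here: the theorem is a direct corollary, and the only thing to watch is bookkeeping of the binomial coefficients and the power $(S_n^{(2)})^{m-2k}=(S_n^{(2)})^{3-2}=S_n^{(2)}$ in the $k=1$ term. If I wanted a self-contained derivation not leaning on Theorems~\ref{th:4.1} and~\ref{th:4.2}, the fallback is to start from the explicit cyclic-average formulas $S_n^{(2)}=R^2+L^2$, $S_n^{(4)}=(R^2+L^2)^2+2R^2L^2$, $S_n^{(6)}=(R^2+L^2)^3+6R^2L^2(R^2+L^2)$ (valid for $n\geq 4$ by Theorem~\ref{th:3.1} with $m=3$ since $\binom{3}{2}\binom{2}{1}\tfrac{1}{2^2}R^2L^2(R^2+L^2)=6R^2L^2(R^2+L^2)$), set $A=R^2+L^2$, $P=R^2L^2$, and verify both $A(A^2+6P)=A((A+3R^2)^2-15R^4)$ and $A(A^2+6P)=A(3(A^2+2P)-2A^2)$ as polynomial identities in $A,P,R^2$; the second is immediate and the first reduces to $6P=6R^2A-6R^4=6R^2(R^2+L^2)-6R^4=6R^2L^2$, which is true. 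Either route is a few lines of elementary algebra.
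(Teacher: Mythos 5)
Your proposal is correct and follows essentially the paper's own route: the paper presents Theorem~\ref{th:4.6} precisely as the $m=3$ specialization of Theorems~\ref{th:4.1} and~\ref{th:4.2} (stated there without further proof), and your binomial bookkeeping and completion of the square check out. One tiny slip, only in your optional fallback remark: the coefficient should be $\binom{3}{2}\binom{2}{1}=6$ rather than $\binom{3}{2}\binom{2}{1}\tfrac{1}{2^2}$, since the factor $\tfrac{1}{2^{2k}}$ in the proof of Theorem~\ref{th:3.1} is absorbed by $B^{2k}=(2RL)^{2k}$ and is absent once the formula is written with $R^{2k}L^{2k}$; this does not affect your main argument.
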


From Theorem \ref{th:4.6} follows:
$$  8(d_1^6+d_2^6+d_3^6+d_4^6)+(d_1^2+d_2^2+d_3^2+d_4^2)^3=6(d_1^2+d_2^2+d_3^2+d_4^2)(d_1^4+d_2^4+d_3^4+d_4^4),       $$
which is equivalent to
$$  3(d_1^2+d_2^2-d_3^2-d_4^2)(d_1^2+d_3^2-d_2^2-d_4^2)(d_1^2+d_4^2-d_2^2-d_3^2)=0.      $$
So
$$  d_1^2+d_3^2=d_2^2+d_4^2     $$
holds.

Obtained relation has generalization for regular $n$-gon. If $n$ is even for the diametrically opposed vertices:

\begin{theorem}\label{th:4.2-3}
For any regular $n$-gon, with even number of vertices $n=2k$:
$$  d_1^2+d_{1+k}^2=d_2^2+d_{2+k}^2=\cdots=d_k^2+d_{2k}^2=2(R^2+L^2).       $$
\end{theorem}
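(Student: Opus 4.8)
The plan is to exploit the fact that in a regular $2k$-gon the vertices $A_i$ and $A_{i+k}$ are diametrically opposite, so that the centroid $O$ is the midpoint of the segment $A_iA_{i+k}$ and $|A_iA_{i+k}|=2R$. Fix an index $i\in\{1,\dots,k\}$ and apply the median (parallelogram) identity in the triangle $MA_iA_{i+k}$ with median $MO$: since $O$ bisects $A_iA_{i+k}$,
$$ MA_i^2+MA_{i+k}^2=2\,MO^2+\tfrac12\,|A_iA_{i+k}|^2=2L^2+2R^2. $$
As this holds for every $i\in\{1,\dots,k\}$, all the sums $d_i^2+d_{i+k}^2$ equal the common value $2(R^2+L^2)$, which is exactly the assertion of the theorem.

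Alternatively, and more in keeping with the computational style used earlier, I would reuse the parametrisation from the proof of Theorem~\ref{th:3.1}: write $d_j^2=R^2+L^2-2RL\cos\bigl(\alpha-(j-1)\tfrac{2\pi}{n}\bigr)$, where $\alpha$ is the angle between $\vec{OM}$ and $\vec{OA_1}$. For $n=2k$ the angular arguments attached to $A_i$ and $A_{i+k}$ differ by $k\cdot\tfrac{2\pi}{2k}=\pi$, hence $\cos\bigl(\alpha-(i-1)\tfrac{2\pi}{n}\bigr)$ and $\cos\bigl(\alpha-(i+k-1)\tfrac{2\pi}{n}\bigr)$ are negatives of one another; adding the two expressions for $d_i^2$ and $d_{i+k}^2$ makes the $B\cos(\cdots)$ terms cancel, leaving $d_i^2+d_{i+k}^2=2(R^2+L^2)$.

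It is worth noting that the result is consistent with relation \eqref{eq:*}: summing the $k$ identities over $i=1,\dots,k$ gives $\sum_{j=1}^{2k}d_j^2=2k(R^2+L^2)=n(R^2+L^2)$, recovering \eqref{eq:*}. I do not anticipate any substantive obstacle; the one point requiring attention is that the parity hypothesis $n=2k$ is essential and is used precisely to guarantee that $A_iA_{i+k}$ is a diameter (so that $O$ is its midpoint). For odd $n$ no two vertices are antipodal and the corresponding clean two-term relation fails.
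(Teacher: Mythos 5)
Your proof is correct, and both of your arguments work. The Apollonius/median identity applied to the triangle $MA_iA_{i+k}$ with $O$ the midpoint of the diameter $A_iA_{i+k}$ gives $d_i^2+d_{i+k}^2=2L^2+\tfrac12(2R)^2=2(R^2+L^2)$ immediately, and your second argument is just the law-of-cosines parametrisation $d_j^2=R^2+L^2-2RL\cos\bigl(\alpha-(j-1)\tfrac{2\pi}{n}\bigr)$ from the proof of Theorem~\ref{th:3.1}, where the two cosine terms attached to antipodal vertices differ by $\pi$ and cancel. Note that the paper itself never writes out a proof of this theorem: it derives the special case $d_1^2+d_3^2=d_2^2+d_4^2$ for the square indirectly, by factoring the identity $8\sum d_i^6+(\sum d_i^2)^3=6\sum d_i^2\sum d_i^4$ coming from Theorem~\ref{th:4.6}, and then states the general even-$n$ result as the natural generalisation for diametrically opposed vertices. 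Your direct argument is therefore a genuine (and simpler) completion of that step: it proves the full statement for every even $n$ in one line, whereas the paper's algebraic route only exhibits the $n=4$ instance and, as you correctly stress, the whole point is the parity hypothesis guaranteeing that $A_i$ and $A_{i+k}$ are antipodal so that $O$ is the midpoint of $A_iA_{i+k}$.
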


Theorem \ref{th:4.2-3} simplifies the system of the cyclic averages:
\begin{align*}
    S_4^{(4)}+3R^2 & =(S_4^{(2)}+R^2)^2, \\[0.2cm]
    d_1^2+d_3^2 & =d_2^2+d_4^2;
\end{align*}
which is analogue to systems obtained in \cite{6, 7}. Moreover, in terms of $R$ and $L$, we get:
\begin{gather*}
    d_1^2+d_3^2=d_2^2+d_4^2=2(R^2+L^2), \\[0.2cm]
    d_1^2d_3^2+d_2^2d_4^2=2(R^4+L^4).
\end{gather*}
The solution of which is:

\begin{theorem}\label{th:4.2-4}
For any point $M(d_1,d_2,d_3,d_4,L)$ and $P_4(R)$:
\begin{align*}
    d_1 & =d_1, \\[0.2cm]
    d_2^2 & =R^2+L^2\pm 4\triangle_{(R,L,d_1)}, \\[0.2cm]
    d_3^2 & =2(R^2+L^2)-d_1^2, \\[0.2cm]
    d_4^2 & =R^2+L^2\mp 4\triangle_{(R,L,d_1)}.
\end{align*}
\end{theorem}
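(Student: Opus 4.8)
The plan is to treat $d_1$ as a free parameter and solve the displayed system
\begin{gather*}
    d_1^2+d_3^2=d_2^2+d_4^2=2(R^2+L^2), \\[0.2cm]
    d_1^2d_3^2+d_2^2d_4^2=2(R^4+L^4)
\end{gather*}
for $d_3^2$ first and for the pair $d_2^2,d_4^2$ afterwards. The first relation gives $d_3^2=2(R^2+L^2)-d_1^2$ immediately, which is already the third line of the claimed answer, so everything reduces to determining $d_2^2$ and $d_4^2$.

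Next I would observe that $d_2^2$ and $d_4^2$ enter the remaining equations only symmetrically: their sum is $d_2^2+d_4^2=2(R^2+L^2)$, and, substituting the value $d_1^2d_3^2=2d_1^2(R^2+L^2)-d_1^4$ just found, the second relation pins down their product as $d_2^2d_4^2=2(R^4+L^4)-2d_1^2(R^2+L^2)+d_1^4$. By Vieta's formulas $d_2^2$ and $d_4^2$ are therefore the two roots of
$$  t^2-2(R^2+L^2)\,t+\big(2(R^4+L^4)-2d_1^2(R^2+L^2)+d_1^4\big)=0.  $$

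The only computation with any content is the discriminant of this quadratic. Expanding,
$$  \tfrac{1}{4}\,\Delta=(R^2+L^2)^2-2(R^4+L^4)+2d_1^2(R^2+L^2)-d_1^4,  $$
and a rearrangement turns the right-hand side into $2R^2L^2+2L^2d_1^2+2d_1^2R^2-R^4-L^4-d_1^4$, which is precisely $16\,\triangle_{(R,L,d_1)}^2$ by Heron's formula for a triangle with sides $R$, $L$, $d_1$. Hence $\sqrt{\Delta}=8\,\triangle_{(R,L,d_1)}$, the roots are $(R^2+L^2)\pm 4\,\triangle_{(R,L,d_1)}$, and allocating them to $d_2^2$ and $d_4^2$ — the labelling producing the $\pm$ / $\mp$ — completes the formula; a direct substitution back into the two equations confirms the solution.

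I do not anticipate a real obstacle: the argument is just ``solve two symmetric equations by Vieta, then recognize Heron.'' The one point to state carefully is the reality condition — the solution is real, i.e. a point $M$ with these distances actually exists, exactly when $R$, $L$, $d_1$ satisfy the triangle inequality, which is the natural geometric compatibility requirement (a vertex at distance $R$ from the centre $O$ and a point at distance $L$ from $O$ can be at distance $d_1$ only in that range); in the degenerate case $\triangle_{(R,L,d_1)}=0$ one gets $d_2^2=d_4^2=R^2+L^2$.
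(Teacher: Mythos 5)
Your proposal is correct and follows essentially the same route as the paper: the theorem is presented there precisely as the solution of the system $d_1^2+d_3^2=d_2^2+d_4^2=2(R^2+L^2)$, $d_1^2d_3^2+d_2^2d_4^2=2(R^4+L^4)$, and your Vieta-plus-Heron computation (discriminant $=64\,\triangle_{(R,L,d_1)}^2$) is the natural way to carry that out, with the discriminant correctly identified. The remark on the reality condition via the triangle inequality for $R,L,d_1$ is a sound addition but not part of the paper's argument.
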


For $P_4$
\begin{align*}
    3(S_n^{(2)})^2-2S_n^{(2)} & =\frac{1}{16}\,\Big[3(d_1^2+d_2^2+d_3^2+d_4^2)^2-8(d_1^4+d_2^4+d_3^4+d_4^4)\Big] \\[0.2cm]
    & =4\triangle_{(d_1,\sqrt{2}\,d_2,d_3)}^2=4\triangle_{(d_2,\sqrt{2}\,d_3,d_4)}^2,
\end{align*}
and
\begin{gather*}
    R^2=\frac{1}{4}\,(d_1^2+d_3^2)\pm \triangle_{(d_1,\sqrt{2}\,d_2,d_3)}=\frac{1}{4}\,(d_2^2+d_4^2)\pm \triangle_{(d_2,\sqrt{2}\,d_3,d_4)}, \\[0.2cm]
    L^2=\frac{1}{4}\,(d_1^2+d_3^2)\mp \triangle_{(d_1,\sqrt{2}\,d_2,d_3)}=\frac{1}{4}\,(d_2^2+d_4^2)\mp \triangle_{(d_2,\sqrt{2}\,d_3,d_4)}.
\end{gather*}

For any point on the circumscribed circle the areas -- $\triangle_{(d_1,\sqrt{2}\,d_2,d_3)}$ and $\triangle_{(d_2,\sqrt{2}\,d_3,d_4)}$ should be zero. Indeed, if the point on the minor arc $A_1A_2$ are satisfied
$$  d_1+\sqrt{2}\,d_2=d_3 \;\;\text{and}\;\; d_2+d_4=\sqrt{2}\,d_3.      $$

\subsection{Regular Pentagon, Hexagon and Heptagon}
\label{subsec:4.3}

There are 4, 5 and 6 cyclic averages for the $P_5$, $P_6$ and $P_7$ cases, respectively:
\begin{align*}
    S_5^{(2)}=S_6^{(2)}=S_7^{(2)} & =R^2+L^2, \\[0.2cm]
    S_5^{(4)}=S_6^{(4)}=S_7^{(4)} & =(R^2+L^2)^2+2R^2L^2, \\[0.2cm]
    S_5^{(6)}=S_6^{(6)}=S_7^{(6)} & =(R^2+L^2)^3+6R^2L^2(R^2+L^2), \\[0.2cm]
    S_5^{(8)}=S_6^{(8)}=S_7^{(8)} & =(R^2+L^2)^4+12R^2L^2(R^2+L^2)^2+6R^4L^4, \\[0.2cm]
    S_6^{(10)}=S_7^{(10)} & =(R^2+L^2)^5+20R^2L^2(R^2+L^2)^3+30R^4L^4(R^2+L^2), \\[0.2cm]
    S_7^{(12)} & =(R^2+L^2)^6+30R^2L^2(R^2+L^2)^4+90R^4L^4(R^2+L^2)^2+20R^6L^6.
\end{align*}
These systems are simplified for the regular hexagon case only.

The vertices $A_1$, $A_3$, $A_5$ and $A_2$, $A_4$, $A_6$ form two equilateral triangles, so they satisfy two cyclic relations for $P_3$. Generally for $n$-gon if $n$ divisible by 3:

\begin{theorem}\label{th:4.3-1}
For any regular $n$-gon, if $n=3\ell$
\begin{align*}
    d_1^2+d_{1+\ell}^2+d_{1+2\ell}^2 & =\cdots=d_{\ell}^2+d_{2\ell}^2+d_{3\ell}^2=3(R^2+L^2), \\[0.2cm]
    d_1^4+d_{1+\ell}^4+d_{1+2\ell}^4 & =\cdots=d_{\ell}^4+d_{2\ell}^4+d_{3\ell}^4=3\big((R^2+L^2)^2+2R^2L^2\big).
\end{align*}
\end{theorem}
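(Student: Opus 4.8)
The plan is to reduce the statement to the cyclic averages of the equilateral triangle, which are already recorded in Subsection~\ref{subsec:4.1} (equivalently, Theorem~\ref{th:3.1} specialized to $n=3$). The key combinatorial--geometric observation is that when $n=3\ell$ the index triples $\{j,\,j+\ell,\,j+2\ell\}$, $j=1,\dots,\ell$, partition $\{1,\dots,n\}$, and for each such $j$ the three vertices $A_j,A_{j+\ell},A_{j+2\ell}$ lie on the circumscribed circle $C(O,R)$ of $P_n(R)$ with angular coordinates mutually separated by $\ell\cdot\tfrac{2\pi}{n}=\tfrac{2\pi}{3}$. Hence each such triple is the vertex set of an equilateral triangle inscribed in $C(O,R)$, i.e.\ a copy of $P_3(R)$.

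First I would make this partition and the equilateral-triangle claim explicit. Then I would use the fact that the centroid of an equilateral triangle coincides with its circumcenter; therefore the centroid of each sub-triangle $A_jA_{j+\ell}A_{j+2\ell}$ is exactly $O$, the centroid of $P_n(R)$. Consequently, for the arbitrary point $M$, the distance from $M$ to the centroid of every sub-triangle is the same number $L$ occurring in the $P_n$ setting, so the hypotheses of the $P_3$ formulas are met with the same $R$ and the same $L$ for all $j$.

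Next I would apply the two $P_3$ cyclic averages to each sub-triangle $A_jA_{j+\ell}A_{j+2\ell}$ together with $M$, obtaining $d_j^2+d_{j+\ell}^2+d_{j+2\ell}^2=3(R^2+L^2)$ and $d_j^4+d_{j+\ell}^4+d_{j+2\ell}^4=3\big((R^2+L^2)^2+2R^2L^2\big)$ for every $j=1,\dots,\ell$. Since the right-hand sides are independent of $j$, all $\ell$ sums of squares coincide, and likewise all $\ell$ sums of fourth powers, which is precisely the assertion.

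The only point that needs genuine care, rather than a real obstacle, is the verification that the sub-triangles are regular with circumradius $R$ and centroid $O$; once that is in place the conclusion is immediate from Subsection~\ref{subsec:4.1}. Alternatively, one could bypass the geometric reduction and argue purely trigonometrically: writing $d_i^{2m}=(A-B\cos\theta_i)^m$ with $A=R^2+L^2$, $B=2RL$ as in the proof of Theorem~\ref{th:3.1}, the three angles attached to a fixed triple are of the form $\beta,\ \beta-\tfrac{2\pi}{3},\ \beta-\tfrac{4\pi}{3}$, so the relevant power sums $\sum_{t}\cos^{m}(\beta-(t-1)\tfrac{2\pi}{3})$ are covered by Lemma~\ref{lem:3.2} with modulus $3$ for $m=1,2<3$, giving $\sum\cos\theta_i=0$ and $\sum\cos^2\theta_i=\tfrac32$ and hence the two claimed identities directly. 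I would nonetheless prefer the geometric version, as it is shorter and reuses Subsection~\ref{subsec:4.1} verbatim.
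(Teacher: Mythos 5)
Your proposal is correct and follows essentially the same route as the paper: the paper's justification is precisely that the vertices $A_j, A_{j+\ell}, A_{j+2\ell}$ form equilateral triangles inscribed in the same circumcircle (hence sharing circumradius $R$ and centroid $O$, so the same $L$), to which the $P_3$ cyclic averages of Subsection~\ref{subsec:4.1} apply. Your added care about the sub-triangles' centroid coinciding with $O$, and the alternative argument via Lemma~\ref{lem:3.2}, are fine but not a different method.
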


The Theorem \ref{th:4.2-3} and Theorem \ref{th:4.3-1} simplify the system of the cyclic averages for the regular hexagon:
\begin{align*}
    d_1^2+d_4^2=d_2^2+d_5^2=d_3^2+d_6^2 & =2(R^2+L^2), \\[0.2cm]
    d_1^2+d_3^2+d_5^2=d_2^2+d_4^2+d_6^2 & =3(R^2+L^2), \\[0.2cm]
    d_1^4+d_3^4+d_5^4=d_2^4+d_4^4+d_6^4 & =3\big((R^2+L^2)^2+2R^2L^2\big).
\end{align*}
By using these relations, we get explicit expressions for distances:

\begin{theorem}\label{th:4.3-2}
For any point $M(d_1,d_2,\dots,d_6,L)$ and $P_6(R)$:
\begin{align*}
    d_1 & =d_1, \\[0.2cm]
    d_2^2 & =\frac{1}{2}\,\Big(R^2+L^2+d_1^2\pm 4\sqrt{3}\,\triangle_{(R,L,d_1)}\Big), \\[0.2cm]
    d_3^2 & =\frac{1}{2}\,\Big(3R^2+3L^2-d_1^2\pm 4\sqrt{3}\,\triangle_{(R,L,d_1)}\Big), \\[0.2cm]
    d_4^2 & =2(R^2+L^2)-d_1^2, \\[0.2cm]
    d_5^2 & =\frac{1}{2}\,\Big(3R^2+3L^2-d_1^2\mp 4\sqrt{3}\,\triangle_{(R,L,d_1)}\Big), \\[0.2cm]
    d_6^2 & =\frac{1}{2}\,\Big(R^2+L^2+d_1^2\mp 4\sqrt{3}\,\triangle_{(R,L,d_1)}\Big).
\end{align*}
\end{theorem}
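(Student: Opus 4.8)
The plan is to solve the simplified system displayed just before the statement by treating $d_1$ as a free parameter and peeling off the remaining five unknowns one at a time, using Theorem~\ref{th:4.5} as the engine for the two alternate triangles of vertices.

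First I would dispose of $d_4^2$: since $A_1$ and $A_4$ are diametrically opposite vertices of $P_6(R)$, Theorem~\ref{th:4.2-3} (with $n=2k$, $k=3$) gives $d_1^2+d_4^2=2(R^2+L^2)$, hence $d_4^2=2(R^2+L^2)-d_1^2$. Next, the three alternate vertices $A_1,A_3,A_5$ form an equilateral triangle inscribed in the same circle $C(O,R)$; its circumradius is $R$ and its centroid is again $O$ (the three position vectors sum to zero), so for the arbitrary point $M$ the quantity $L=|OM|$ is exactly the ``$L$'' of that triangle. Applying Theorem~\ref{th:4.5} to $A_1A_3A_5$, with $d_1$ in the role of the free distance and $d_3,d_5$ in the roles of the two remaining distances, gives
$$d_3^2=\tfrac12\bigl(3(R^2+L^2)-d_1^2\pm 4\sqrt3\,\triangle_{(R,L,d_1)}\bigr),\qquad d_5^2=\tfrac12\bigl(3(R^2+L^2)-d_1^2\mp 4\sqrt3\,\triangle_{(R,L,d_1)}\bigr),$$
which are precisely the asserted expressions for $d_3^2$ and $d_5^2$.

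Finally, $d_2^2$ and $d_6^2$ come from the other two diametral pairs in Theorem~\ref{th:4.2-3}: $A_2$ is opposite $A_5$ and $A_3$ is opposite $A_6$, so $d_2^2=2(R^2+L^2)-d_5^2$ and $d_6^2=2(R^2+L^2)-d_3^2$. Substituting the two formulas just obtained and simplifying $2(R^2+L^2)-\tfrac12\bigl(3(R^2+L^2)-d_1^2\mp 4\sqrt3\,\triangle_{(R,L,d_1)}\bigr)=\tfrac12\bigl(R^2+L^2+d_1^2\pm 4\sqrt3\,\triangle_{(R,L,d_1)}\bigr)$ yields the stated $d_2^2$ and $d_6^2$; the $\pm/\mp$ bookkeeping is consistent because flipping the sign in $d_5^2$ flips it in $d_2^2$ and flipping it in $d_3^2$ flips it in $d_6^2$, so the sign attached to $d_2^2$ agrees with that attached to $d_3^2$ and the sign attached to $d_6^2$ agrees with that attached to $d_5^2$, exactly as written.

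There is no serious obstacle in this argument; the one point deserving a sentence of justification is the claim that the alternate triple $A_1,A_3,A_5$ genuinely constitutes a regular $P_3(R)$ with centroid $O$, so that Theorem~\ref{th:4.5} applies verbatim with the same $R$ and the same $L$. As routine verifications I would add a consistency check that the $d_2,d_4,d_6$ produced above satisfy $d_2^2+d_4^2+d_6^2=3(R^2+L^2)$ and $d_2^4+d_4^4+d_6^4=3\bigl((R^2+L^2)^2+2R^2L^2\bigr)$ for the second alternate triangle $A_2A_4A_6$; this must hold by Theorem~\ref{th:4.3-1} for any $M$, but confirming it directly from the closed forms rules out an arithmetic slip.
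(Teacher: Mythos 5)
Your proposal is correct and follows essentially the paper's own route: the paper also derives Theorem~\ref{th:4.3-2} by combining the diametral relations of Theorem~\ref{th:4.2-3} with the inscribed equilateral-triangle relations (Theorem~\ref{th:4.3-1}, whose solution in the form you need is exactly Theorem~\ref{th:4.5} applied to $A_1A_3A_5$). Your sign bookkeeping via $d_2^2=2(R^2+L^2)-d_5^2$ and $d_6^2=2(R^2+L^2)-d_3^2$ reproduces the stated coupling of $\pm$ and $\mp$, so nothing is missing.
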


For $P_6$:
\begin{align*}
    3(S_n^{(2)})^2-2S_n^{(4)} & =3\Big(\frac{d_1^2+d_2^2+\cdots+d_6^2}{6}\Big)^2-2\,\frac{d_1^4+d_2^4+\cdots+d_6^4}{6} \\[0.2cm]
    & =\frac{1}{3}\,\big((d_1^2+d_3^2+d_5^2)^2-2(d_1^4+d_3^4+d_5^4)\big) \\[0.2cm]
    & =\frac{16}{3}\,\triangle_{(d_1,d_3,d_5)}^2=\frac{16}{3}\,\triangle_{(d_2,d_4,d_6)}^2
\end{align*}
and
\begin{align*}
    R^2 & =\frac{1}{6}\,\Big(d_1^2+d_3^2+d_5^2\pm 4\sqrt{3}\triangle_{(d_1,d_3,d_5)}\Big)=
                    \frac{1}{6}\,\Big(d_2^2+d_4^2+d_6^2\pm 4\sqrt{3}\triangle_{(d_2,d_4,d_6)}\Big), \\[0.2cm]
    L^2 & =\frac{1}{6}\,\Big(d_1^2+d_3^2+d_5^2\mp 4\sqrt{3}\triangle_{(d_1,d_3,d_5)}\Big)=
                    \frac{1}{6}\,\Big(d_2^2+d_4^2+d_6^2\mp 4\sqrt{3}\triangle_{(d_2,d_4,d_6)}\Big).
\end{align*}

For any point on the circumscribed circle the area $\triangle_{(d_1,d_3,d_5)}$ as well as $\triangle_{(d_2,d_4,d_6)}$ vanishes. Indeed if the point on the minor arc $A_1A_2$:
$$  d_1+d_3=d_5 \;\;\text{and}\;\; d_2+d_6=d_4.     $$

\subsection{Regular Octagon, Nonagon and Decagon}
\label{subsec:4.4}

There are 8, 9 and 10 cyclic averages for the $P_8$, $P_9$ and $P_{10}$ cases, respectively. The cyclic averages from the second to the twelfth powers are the same as for regular heptagon, so we write only new ones:
\allowdisplaybreaks[0]
\begin{align*}
    S_8^{(14)}=S_9^{(14)}=S_{10}^{(14)} & =(R^2+L^2)^7+42R^2L^2(R^2+L^2)^5+210R^4L^4(R^2+L^2)^3 \\[0.2cm]
    &\qquad\quad +140R^6L^6(R^2+L^2), \\[0.2cm]
    S_9^{(16)}=S_{10}^{(16)} & =(R^2+L^2)^8+56R^2L^2(R^2+L^2)^6+420R^4L^4(R^2+L^2)^4 \\[0.2cm]
    &\qquad\quad +560R^6L^6(R^2+L^2)^2+70R^8L^6, \\[0.2cm]
    S_{10}^{(18)} & =(R^2+L^2)^9+72R^2L^2(R^2+L^2)^7+756R^4L^4(R^2+L^2)^5  \\[0.2cm]
    &\qquad\quad +1680R^6L^6(R^2+L^2)^3+630R^8L^8(R^2+L^2),
\end{align*}
All three cases $n=8,9,10$ admit further simplifications.

For $P_8$ Theorem \ref{th:4.2-3} gives:
$$  d_1^2+d_5^2=d_2^2+d_6^2=d_3^2+d_7^2=d_4^2+d_8^2=2(R^2+L^2).     $$
The vertices $A_1$, $A_3$, $A_5$, $A_7$ and $A_2$, $A_4$, $A_6$, $A_8$ form two squares, so they satisfy ``additional'' cyclic relations for $P_4$.

Generally, if $n$ is divisible by 4:

\begin{theorem}\label{the:4.4-1}
For any regular $n$-gon, if $n=4p$:
\begin{align*}
    d_1^4+d_{1+p}^4+d_{1+2p}^4+d_{1+3p}^4 & =\cdots=d_p^4+d_{2p}^4+d_{3p}^4+d_{4p}^4=4\big((R^2+L^2)^2+2R^2L^2\big), \\[0.2cm]
    d_1^6+d_{1+p}^6+d_{1+2p}^6+d_{1+3p}^6 & =\cdots=d_p^6+d_{2p}^6+d_{3p}^6+d_{4p}^6=4\big((R^2+L^2)^3+6R^2L^2(R^2+L^2)\big).
\end{align*}
\end{theorem}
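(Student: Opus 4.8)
The plan is to exploit the same ``sub-polygon'' decomposition that underlies Theorems~\ref{th:4.2-3} and~\ref{th:4.3-1}. When $n=4p$, the vertices $A_1,A_2,\dots,A_n$ of $P_n(R)$ split into $p$ disjoint quadruples
$$  \{A_j,\,A_{j+p},\,A_{j+2p},\,A_{j+3p}\}, \qquad j=1,\dots,p,     $$
and the first step is to observe that each such quadruple is a square inscribed in the circumscribed circle $C(O,R)$ of the $n$-gon. This is immediate from an angular count: consecutive vertices within a quadruple are separated by $p\cdot\frac{2\pi}{n}=\frac{\pi}{2}$, so the four points are the vertices of a regular $4$-gon of circumradius $R$ whose centroid is the common centre $O$ — that is, a copy of $P_4(R)$ sharing its centre with $P_n(R)$.

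The second step is to apply the cyclic averages of $P_4(R)$ (Subsection~\ref{subsec:4.2}, equivalently Theorem~\ref{th:3.1} with $n=4$ and $m=2,3$) to each of these $p$ squares. The arbitrary point $M$ lies at distance $L$ from $O$, which is also the centroid of every one of the $p$ squares; hence for each $j$ the distances $d_j,d_{j+p},d_{j+2p},d_{j+3p}$ satisfy
$$  d_j^4+d_{j+p}^4+d_{j+2p}^4+d_{j+3p}^4=4S_4^{(4)}=4\big((R^2+L^2)^2+2R^2L^2\big)     $$
and
$$  d_j^6+d_{j+p}^6+d_{j+2p}^6+d_{j+3p}^6=4S_4^{(6)}=4\big((R^2+L^2)^3+6R^2L^2(R^2+L^2)\big).     $$
Since these right-hand sides do not depend on $j$, all $p$ fourth-power sums coincide and all $p$ sixth-power sums coincide, which is exactly the assertion of the theorem.

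The only point needing care — and the main (mild) obstacle — is to be sure that the $P_4$ identities transfer verbatim. They do, because Theorem~\ref{th:3.1} holds for an \emph{arbitrary} position of $M$ on the circle $C(O,L)$, i.e. for any value of the angle $\alpha$ between $R$ and $L$, so the differing orientations of the $p$ squares relative to $M$ are irrelevant. One must also respect the power constraint: for $P_4$ the $\sum_{[R,L]}$ sums exist only when $m<4$, so only the fourth power ($m=2$) and the sixth power ($m=3$) are available, which is why the statement stops there and contains no eighth-power identity of the same shape. The hypothesis $4\mid n$ is exactly what makes the quadruples close up into squares; for other residues the analogous grouping fails. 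Structurally this is the $P_4$-analogue of Theorem~\ref{th:4.2-3} (pairs of diametrically opposite vertices) and Theorem~\ref{th:4.3-1} (triples forming equilateral triangles $P_3$).
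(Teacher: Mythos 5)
Your proposal is correct and matches the paper's (implicit) argument: the paper justifies the theorem by exactly this observation that the vertices spaced by $p$ form squares concentric with $P_n(R)$, so the $P_4(R)$ cyclic relations (Theorem~\ref{th:3.1} with $n=4$, $m=2,3$) apply to each quadruple with the same $R$ and $L$. Your added remarks on the irrelevance of each square's orientation (the angle $\alpha$) and on why the identities stop at the sixth power only make explicit what the paper leaves unsaid.
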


For $P_9$ Theorem \ref{th:4.3-1} gives:
\begin{align*}
    d_1^2+d_4^2+d_7^2=d_2^2+d_5^2+d_8^2=d_3^2+d_6^2+d_9^2 & =3(R^2+L^2), \\[0.2cm]
    d_1^4+d_4^4+d_7^4=d_2^4+d_5^4+d_8^4=d_3^4+d_6^4+d_9^4 & =3\big((R^2+L^2)^2+2R^2L^2\big).
\end{align*}
For $P_{10}$, from Theorem \ref{th:4.2-3}:
$$  d_1^2+d_6^2=d_2^2+d_7^2=d_3^2+d_8^2=d_4^2+d_9^2=d_5^2+d_{10}^2=2(R^2+L^2).      $$
The vertices $A_1$, $A_3$, $A_5$, $A_7$, $A_9$ and $A_2$, $A_4$, $A_6$, $A_8$, $A_{10}$ form two re\-gu\-lar pentagons, so they satisfy ``additional'' cyclic relations for $P_5$.

Generally, if $n$ is divisible by 5:

\begin{theorem}\label{th:4.4-2}
For any regular $n$-gon, if $n=5t$
\begin{align*}
    d_1^2+d_{1+t}^2+d_{1+2t}^2+d_{1+3t}^2+d_{1+4t}^2=\cdots & =d_t^2+d_{2t}^2+d_{3t}^2+d_{4t}^2+d_{5t}^2=5(R^2+L^2), \\[0.2cm]
    d_1^4+d_{1+t}^4+d_{1+2t}^4+d_{1+3t}^4+d_{1+4t}^4=\cdots & =d_t^4+d_{2t}^4+d_{3t}^4+d_{4t}^4+d_{5t}^4=5\big((R^2+L^2)^2+2R^2L^2\big), \\[0.2cm]
    d_1^6+d_{1+t}^6+d_{1+2t}^6+d_{1+3t}^6+d_{1+4t}^6=\cdots & =d_t^6+d_{2t}^6+d_{3t}^6+d_{4t}^6+d_{5t}^6 \\[0.2cm]
    & =5\big((R^2+L^2)^3+6R^2L^2(R^2+L^2)\big), \\[0.2cm]
    d_1^8+d_{1+t}^8+d_{1+2t}^8+d_{1+3t}^8+d_{1+4t}^8=\cdots & =d_t^8+d_{2t}^8+d_{3t}^8+d_{4t}^8+d_{5t}^8 \\[0.2cm]
    & =5\big((R^2+L^2)^4+12R^2L^2(R^2+L^2)^2+6R^4L^4\big).
\end{align*}
\end{theorem}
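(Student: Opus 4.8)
The plan is to observe that the statement is not really about the ambient $n$-gon at all, but about a regular pentagon hidden inside it. If $n=5t$ and the vertices of $P_n(R)$ are $A_k$ with angular position $(k-1)\frac{2\pi}{n}$ on the circumscribed circle $C(O,R)$ (indices read modulo $n$), then for any $j$ the five vertices $A_j,A_{j+t},A_{j+2t},A_{j+3t},A_{j+4t}$ occupy angular positions differing by the constant step $\frac{2\pi t}{n}=\frac{2\pi}{5}$. Hence they are precisely the vertices of a regular pentagon $P_5(R)$ inscribed in the \emph{same} circle, and its centroid is again $O$. This is the same mechanism already exploited for Theorems \ref{th:4.2-3}, \ref{th:4.3-1} and \ref{the:4.4-1}, where the hidden sub-configuration was a diameter pair, an equilateral triangle, and a square, respectively.

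Next I would set up the distances. For an arbitrary point $M$ in the plane, write $d_i$ for its distance to $A_i$ and $L$ for its distance to $O$. Relative to the sub-pentagon just identified, $M$ is an arbitrary point $M(d_j,d_{j+t},d_{j+2t},d_{j+3t},d_{j+4t},L)$ with the very same circumradius $R$ and the very same $L$ as for the original $n$-gon. Applying Theorem \ref{th:3.1} with $n=5$ — equivalently, reading off the cyclic averages $S_5^{(2)},S_5^{(4)},S_5^{(6)},S_5^{(8)}$ displayed in Subsection \ref{subsec:4.3} — yields, for $m=1,2,3,4$,
\[
  \sum_{r=0}^{4} d_{j+rt}^{2m}=5\,S_5^{(2m)},
\]
and the right-hand sides are exactly $5(R^2+L^2)$, $5\big((R^2+L^2)^2+2R^2L^2\big)$, $5\big((R^2+L^2)^3+6R^2L^2(R^2+L^2)\big)$ and $5\big((R^2+L^2)^4+12R^2L^2(R^2+L^2)^2+6R^4L^4\big)$, which are precisely the four right-hand expressions asserted in the theorem.

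Finally, since the value $5S_5^{(2m)}$ produced above depends only on $R$ and $L$ and not on the choice of $j$, letting $j$ run through $1,2,\dots,t$ partitions the $5t$ vertices of $P_n(R)$ into $t$ disjoint regular pentagons, each contributing the same value; this delivers the full chain of equalities in the statement. I do not expect a genuine obstacle here: the only points needing care are the index bookkeeping modulo $n$ and the verification that the sub-configuration really is a regular pentagon centred at $O$ (so that Theorem \ref{th:3.1} applies verbatim), together with the remark that the restriction $m\le 4$ is forced — by Remark \ref{rem:3.1} the sum $\sum_{r=0}^{4} d_{j+rt}^{10}$ already involves $\cos 5\alpha$ and so is not expressible through $R$ and $L$ alone, which is why no tenth-power relation appears.
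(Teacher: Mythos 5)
Your proposal is correct and follows essentially the same route as the paper: the paper justifies Theorem \ref{th:4.4-2} by noting that vertices whose indices form an arithmetic progression with step $t$ constitute a regular pentagon inscribed in the same circle with the same centroid, so the $P_5$ cyclic relations (i.e.\ Theorem \ref{th:3.1} with $n=5$, $m=1,2,3,4$) apply to each of the $t$ sub-pentagons, exactly as you argue. Your added remark that Remark \ref{rem:3.1} blocks a tenth-power relation is a sensible clarification but not a departure from the paper's reasoning.
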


To summarize the obtained results, we conclude: every regular $n$-gon has an $n-1$ number of cyclic averages, but if $n$ is the composite number we have ``additional'' relations for the distances, which are obtained from the cyclic averages of the $n_1$-gon, where $n_1$ is divisible of $n$.

\bigskip
\section{Rational Distances Problem. Solution for $n=24$}
\label{sec:5}

Is there a point all of whose distances to the vertices of the unit polygon are rational? The problem has a long history especially for the case of a square. An extensive historical review is given in \cite{6, 7, 8}. For case of an equilateral triangle answer is positive \cite{9}. According to \cite{10} open problems are in following cases
$$  n=4,6,8,12\;\text{and}\;24.      $$
For $n=6$ -- only trivial point is known -- the centroid of the unit hexagon.

By Theorem \ref{th:4.3} the side $a_n$ of the regular $n$-gon is:
$$  \frac{a_n^2}{2\sin^2\frac{\pi}{n}}=S_n^{(2)}\pm\sqrt{3(S_n^{(2)})^2-2S_n^{(4)}}\,.      $$
For the unit icositetragon $(n=24)$:
$$  \sin\frac{\pi}{24}=\frac{1}{2}\,\sqrt{\frac{S_{24}^{(2)}\pm\sqrt{3(S_{24}^{(2)})^2-2S_{24}^{(4)}}}{S_{24}^{(4)}-(S_{24}^{(2)})^2}}\,.     $$
The right side is the root of the fourth degree polynomial equation with rational coefficients:
$$  8\big(S_n^{(4)}-(S_n^{(2)})^2\big)x^4-4S_n^{(2)}x^2+1=0,        $$
thus it is the algebraic number of degree $\leq 4$. On the other hand
$$  \sin\frac{\pi}{24}=\frac{1}{2}\,\sqrt{2-\sqrt{2+\sqrt{3}}}\,,     $$
is the algebraic number of degree $>4$ \cite{11}. So,

\begin{theorem}\label{th:5.1}
There is not a point in the plane that is at rational distances from the vertices of the unit regular $24$-gon.
\end{theorem}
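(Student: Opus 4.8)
The plan is to argue by contradiction: suppose there were a point $M$ in the plane at rational distances $d_1,\dots,d_{24}$ from the vertices of the unit regular $24$-gon. Then the cyclic averages $S_{24}^{(2)}=\frac{1}{24}\sum d_i^2$ and $S_{24}^{(4)}=\frac{1}{24}\sum d_i^4$ would both be rational numbers, being sums of rationals. The key identity to exploit is the formula from Theorem~\ref{th:4.3} (equivalently Theorem~\ref{th:4.4} and the displayed side-length formula), which expresses $\sin\frac{\pi}{24}$ in terms of $R$ and $L$, hence in terms of $S_{24}^{(2)}$ and $S_{24}^{(4)}$. Concretely, using $R=1$ for the unit polygon gives $a_{24}=2\sin\frac{\pi}{24}$ and $R^2=\frac12\big(S_{24}^{(2)}+\sqrt{3(S_{24}^{(2)})^2-2S_{24}^{(4)}}\big)$; solving for $\sin\frac{\pi}{24}$ yields
$$  \sin\frac{\pi}{24}=\frac{1}{2}\,\sqrt{\frac{S_{24}^{(2)}\pm\sqrt{3(S_{24}^{(2)})^2-2S_{24}^{(4)}}}{S_{24}^{(4)}-(S_{24}^{(2)})^2}}\,.  $$

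Next I would establish the degree bound. Setting $x=\sin\frac{\pi}{24}$, the displayed quartic $8\big(S_{24}^{(4)}-(S_{24}^{(2)})^2\big)x^4-4S_{24}^{(2)}x^2+1=0$ has rational coefficients (since $S_{24}^{(2)},S_{24}^{(4)}\in\mathbb{Q}$ under our assumption), so $x$ is an algebraic number of degree at most $4$ over $\mathbb{Q}$. To verify this quartic, I would simply square the expression for $x^2$ twice to clear the nested radical: $x^2=\tfrac14\cdot\frac{S_{24}^{(2)}\pm\sqrt{3(S_{24}^{(2)})^2-2S_{24}^{(4)}}}{S_{24}^{(4)}-(S_{24}^{(2)})^2}$, so $\big(4(S_{24}^{(4)}-(S_{24}^{(2)})^2)x^2-S_{24}^{(2)}\big)^2=3(S_{24}^{(2)})^2-2S_{24}^{(4)}$, which expands to the stated quartic in $x^2$ — a routine verification.

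Then I would invoke the known exact value $\sin\frac{\pi}{24}=\frac12\sqrt{2-\sqrt{2+\sqrt{3}}}$ and the fact, cited as \cite{11}, that this is an algebraic number of degree strictly greater than $4$ over $\mathbb{Q}$ (its minimal polynomial has degree $8$, since $\frac{\pi}{24}$ corresponds to a primitive $48$th root of unity region and the three nested square roots are genuinely independent). This contradicts the degree-$\le 4$ conclusion of the previous paragraph. Hence no such rational point $M$ can exist, proving Theorem~\ref{th:5.1}.

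The main obstacle — really the only nontrivial input — is the irrationality/degree claim for $\sin\frac{\pi}{24}$: one must be sure that the nested radical $\sqrt{2-\sqrt{2+\sqrt{3}}}$ does not collapse to something of degree $\le 4$. This is handled by the cited reference, but if one wanted a self-contained argument one would compute the minimal polynomial of $2\sin\frac{\pi}{24}$ (degree $8$, related to the $48$th cyclotomic polynomial) and check it is irreducible over $\mathbb{Q}$, confirming $[\mathbb{Q}(\sin\tfrac{\pi}{24}):\mathbb{Q}]=8>4$. Everything else is bookkeeping: tracking that rational $d_i$ force rational cyclic averages, and that the quartic relation forces degree $\le 4$.
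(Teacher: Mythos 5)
Your argument is in substance the paper's own proof: rational distances force $S_{24}^{(2)}$ and $S_{24}^{(4)}$ to be rational, the quartic $8\big(S_{24}^{(4)}-(S_{24}^{(2)})^2\big)x^4-4S_{24}^{(2)}x^2+1=0$ then makes $\sin\frac{\pi}{24}$ algebraic of degree at most $4$, and this contradicts the degree bound for $\sin\frac{\pi}{24}=\frac12\sqrt{2-\sqrt{2+\sqrt3}}$ cited from \cite{11}; your squaring verification of the quartic is exactly how the paper's displayed equation arises. The one step you should fix is the normalization: the ``unit'' regular $24$-gon is the one with side $a_{24}=1$, not circumradius $R=1$. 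The displayed expression for $\sin\frac{\pi}{24}$ comes from substituting $R=\frac{a_{24}}{2\sin(\pi/24)}=\frac{1}{2\sin(\pi/24)}$ into $R^2=\frac12\big(S_{24}^{(2)}\pm\sqrt{3(S_{24}^{(2)})^2-2S_{24}^{(4)}}\,\big)$; if you instead set $R=1$, that expression is no longer equal to $\sin\frac{\pi}{24}$ (with $R=1$ one computes $\sqrt{3(S^{(2)})^2-2S^{(4)}}=|1-L^2|$ and the right-hand side collapses to $\frac12$ or $\frac{1}{2L}$), and the rational-distance problem for a circumradius-one $24$-gon is a genuinely different question, since rescaling between the two normalizations is by the irrational factor $2\sin\frac{\pi}{24}$. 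With $a_{24}=1$ taken as the hypothesis, the rest of your bookkeeping is exactly the paper's argument. A minor point that both you and the paper leave tacit: the formula divides by $S_{24}^{(4)}-(S_{24}^{(2)})^2=2R^2L^2$, so the case $L=0$ must be excluded separately; this is harmless, since the centroid lies at distance $R=\frac{1}{2\sin(\pi/24)}$ from every vertex, which is irrational.
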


For positive answers for the $P_4$ and $P_6$ cases the necessary conditions are the rationalities of the equal areas:
\begin{enumerate}
\item[-] $\triangle_{(d_1,\sqrt{2}\,d_2,d_3)}=\triangle_{(d_2,\sqrt{2}d_3,d_4)}$, if $n=4$;

\item[-] $\sqrt{3}\,\triangle_{(d_1,d_3,d_5)}=\sqrt{3}\,\triangle_{(d_2,d_4,d_6)}$, if $n=6$.
\end{enumerate}

\bigskip

\section{Sphere as Locus of Constant $\sum_{[R,L]}$ Sums}
\label{sec:6}

For regular polygons with different vertices the number of the $\sum_{[R,L]}$ sums are different too. As we see, unlike the plane case, dual Platonic solids have the same number of the $\sum_{[R,L]}$ sums:
\begin{align*}
    & \text{regular tetrahedron -- } \sum\nolimits_{[R,L]}^{(2)}, \;\; \sum\nolimits_{[R,L]}^{(4)}; \\
    & \text{octahedron and cube -- } \sum\nolimits_{[R,L]}^{(2)}, \;\; \sum\nolimits_{[R,L]}^{(4)}, \;\; \sum\nolimits_{[R,L]}^{(6)}; \\
    & \text{icosahedron and dodecahedron -- } \sum\nolimits_{[R,L]}^{(2)}, \;\; \sum\nolimits_{[R,L]}^{(4)}, \;\; \sum\nolimits_{[R,L]}^{(6)}, \;\;
                    \sum\nolimits_{[R,L]}^{(8)}, \;\; \sum\nolimits_{[R,L]}^{(10)}.
\end{align*}
To prove these, we consider each Platonic solid separately. In all cases, we consider solids centered at the origin and use simple Cartesian coordinates.

\subsection{Regular Tetrahedron}
\label{subsec:6.1}

The coordinates of the vertices $T_4(R)$:
$$  A_{1,2}(c,\pm c,\pm c), \;\; A_{3,4}(-c,\pm c,\mp c) \;\;\text{and}\;\; R=\sqrt{3}\,c.      $$

Consider an arbitrary point in space $M(d_1,d_2,d_3,d_4,L)$ with the coordinates: $(x,y,z)$. The distance between $M$ and the centroid $O$ of the tetrahedron:
$$  L^2=x^2+y^2+z^2.        $$
Then,
\allowdisplaybreaks
\begin{gather*}
\begin{aligned}
    d_{1,2}^2 & =(x-c)^2+(y\mp c)^2+(z\mp c)^2=R^2+L^2+2c(-x\mp y\mp z), \\[0.2cm]
    d_{3,4}^2 & =(x+c)^2+(y\mp c)^2+(z\pm c)^2=R^2+L^2+2c(x\mp y\pm z),
\end{aligned} \\[0.2cm]
\begin{aligned}
    \sum_{1}^4 d_i^4 & =\big(R^2+L^2+2c(-x\mp y\mp z)\big)^2+\big(R^2+L^2+2c(x\mp y\pm z)\big)^2 \\[0.2cm]
    & =4(R^2+L^2)^2+4c^2\big((x+y+z)^2+(-x+y+z)^2+(x-y+z)^2+(x+y-z)^2\big) \\[0.2cm]
    & =4\Big((R^2+L^2)^2+\frac{4}{3}\,R^2L^2\Big).
\end{aligned}
\end{gather*}
If for $T_4(R)$:
$$  \sum_{1}^4 d_i^4>4R^4,       $$
then

\begin{theorem}\label{th:6.1}
The locus of points in the space such that the sum of the fourth power of the distances to the vertices of a given regular tetrahedron is constant is a sphere whose center is the centroid of the tetrahedron.
\end{theorem}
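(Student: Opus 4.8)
The plan is to reduce Theorem~\ref{th:6.1} to the computation already carried out just above its statement, namely the closed form
$$
\sum_{1}^4 d_i^4 = 4\Big((R^2+L^2)^2+\tfrac{4}{3}R^2L^2\Big),
$$
and then to argue that setting this equal to a constant $c$ defines a sphere centered at the centroid $O$. The key observation is that the right-hand side depends on the position of $M$ only through $L^2 = x^2+y^2+z^2$; in particular it is a function of $L^2$ alone, say $F(L^2) = 4(R^2+L^2)^2 + \tfrac{16}{3}R^2L^2$, with $R$ fixed for the given tetrahedron. So the locus $\{M : \sum_1^4 d_i^4 = c\}$ is exactly $\{M : F(L^2)=c\}$.

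First I would record that $F$, viewed as a function of the single variable $t = L^2 \ge 0$, is $F(t) = 4t^2 + (8R^2 + \tfrac{16}{3}R^2)t + 4R^4 = 4t^2 + \tfrac{40}{3}R^2 t + 4R^4$, which is strictly increasing on $[0,\infty)$ since both the linear coefficient and the quadratic coefficient are positive. Hence $F$ is a bijection from $[0,\infty)$ onto $[F(0),\infty) = [4R^4,\infty)$. Therefore the equation $F(L^2) = c$ has a solution in $L^2 \ge 0$ if and only if $c \ge 4R^4$, and when it does, the solution $L^2 = F^{-1}(c) =: \rho^2$ is unique. This is where the hypothesis $\sum_1^4 d_i^4 > 4R^4$ enters: it forces $c > 4R^4$, so $\rho^2 > 0$ and the locus is $\{M : x^2+y^2+z^2 = \rho^2\}$, i.e. the sphere $S(O,\rho)$ of positive radius centered at the centroid. (Explicitly one may solve the quadratic: $\rho^2 = -\tfrac{5}{3}R^2 + \sqrt{\tfrac{25}{9}R^4 - R^4 + c/4} = -\tfrac{5}{3}R^2 + \tfrac12\sqrt{\tfrac{64}{9}R^4 + c}$, but the monotonicity argument already suffices and is cleaner.)

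Finally I would note, in parallel with Remark~\ref{rem:3.2} for the polygon case, that the two degenerate possibilities are handled by the same bijectivity statement: if $\sum_1^4 d_i^4 = 4R^4$ then $L^2 = 0$ and the locus degenerates to the single point $O$, while if $\sum_1^4 d_i^4 < 4R^4$ the locus is empty because $F$ never takes values below $4R^4$. So the content of the theorem is precisely that the strict inequality in the hypothesis is the condition for the locus to be a genuine (nondegenerate) sphere.

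I do not anticipate a real obstacle here: the substantive work — expanding $\sum_1^4 d_i^4$ and recognizing the cross-terms $(x+y+z)^2 + (-x+y+z)^2 + (x-y+z)^2 + (x+y-z)^2 = 4(x^2+y^2+z^2) = 4L^2$, together with $c^2 = R^2/3$ — is already done in the displayed computation preceding the theorem. The only thing to add is the elementary monotonicity/invertibility remark that turns the identity ``$\sum d_i^4$ is a function of $L^2$ alone'' into the geometric statement ``the level sets are spheres about $O$,'' and the check that the stated inequality is exactly what makes the radius positive rather than zero or imaginary.
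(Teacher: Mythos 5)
Your proposal is correct and takes essentially the same route as the paper: the identity $\sum_{1}^4 d_i^4=4\big((R^2+L^2)^2+\tfrac{4}{3}R^2L^2\big)$, obtained from the vertex coordinates and the cancellation of cross terms with $c^2=R^2/3$, is exactly the paper's argument, and the theorem is read off from the fact that the sum depends on the position of $M$ only through $L^2$. Your explicit monotonicity/invertibility remark (that $F(t)=4t^2+\tfrac{40}{3}R^2t+4R^4$ is strictly increasing on $[0,\infty)$, so a constant value $>4R^4$ determines a unique $L^2>0$) merely spells out the step the paper leaves implicit, including the degenerate cases recorded in Remark~\ref{rem:6.1}.
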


\begin{remark}\label{rem:6.1}
\ \ \
\begin{enumerate}
\item[-] If $\sum\limits_{1}^4 d_i^4=4R^4$ the locus is the centroid.

\item[-] If $\sum\limits_{1}^4 d_i^4<4R^4$ the locus is the empty set.
\end{enumerate}
\end{remark}

The sums of the distances of the power more than 4 contain $x$, $y$ and $z$ (like $\alpha$ for the plane case), so for $T_4$ only the sums of the second and fourth powers are $\sum_{[R,L]}$ sums.

\subsection{Octahedron and Cube}

The coordinates of the vertices of the octahedron $T_6(R)$:
$$  A_{1,2}(\pm c,0,0), \;\; A_{3,4}(0,\pm c,0), \;\; A_{5,6}(0,0,\pm c) \;\;\text{and}\;\; R=c.        $$
For an arbitrary point $P(d_1,d_2,\dots,d_6,L)$:
\begin{align*}
    d_{1,2}^2 & =R^2+L^2\pm 2Rx, \\[0.2cm]
    d_{3,4}^2 & =R^2+L^2\pm 2Ry, \\[0.2cm]
    d_{5,6}^2 & =R^2+L^2\pm 2Rz.
\end{align*}

Beginning from $T_6$ each Platonic solid (except tetrahedron) has diametrically opposed   \linebreak      vertices, so for them Theorem \ref{th:4.2-3} is satisfied. For the sums of the fourth and sixth powers:
\allowdisplaybreaks[0]
\begin{align*}
    \sum_{1}^6 d_i^4 & =(R^2+L^2\pm 2Rx)^2+(R^2+L^2\pm 2Ry)^2+(R^2+L^2\pm 2Rz)^2 \\[0.2cm]
    & =6\Big((R^2+L^2)^2+\frac{4}{3}\,R^2L^2\Big), \\[0.2cm]
    \sum_{1}^6 d_i^6 & =6(R^2+L^2)^3+24(R^2+L^2)R^2(x^2+y^2+z^2) \\[0.2cm]
    & =6\big((R^2+L^2)^3+4R^2L^2(R^2+L^2)\big).
\end{align*}

For the cube $T_8(R)$:
\begin{gather*}
    A_{1,2}(\mp c,\mp c,\mp c), \;\; A_{3,4}(\pm c,\pm c,\mp c), \\[0.2cm]
    A_{5,6}(\pm c,\mp c,\pm c), \;\; A_{7,8}(\mp c,\pm c,\pm c)
\end{gather*}
and $R=\sqrt{3}\,c$.

The distances from the $P(d_1,d_2,\dots,d_8,L)$:
\begin{gather*}
    d_{1,2}^2=R^2+L^2\pm 2c(x+y+z), \quad d_{3,4}^2=R^2+L^2\mp 2c(x+y-z), \\[0.2cm]
    d_{5,6}^2=R^2+L^2\mp 2c(x+z-y), \quad d_{7,8}^2=R^2+L^2\pm 2c(x-y-z).
\end{gather*}

The vertices $A_1$, $A_3$, $A_5$, $A_7$ and $A_2$, $A_4$, $A_6$, $A_8$ form two regular tetrahedrons, so they satisfy the regular tetrahedron relations.

\begin{theorem}\label{th:6.2}
For an arbitrary point in the space, the sum of the quadruple of the distances to the vertices of the cube which lie on parallel faces and are endpoints of skew face diagonals, satisfies
\begin{align*}
    \sum_{1}^4 d_{2k-1}^2 & =\sum_{1}^4 d_{2k}^2=4(R^2+L^2), \\[0.2cm]
    \sum_{1}^4 d_{2k-1}^4 & =\sum_{1}^4 d_{2k}^4=4\Big((R^2+L^2)^2+\frac{4}{3}\,R^2L^2\Big).
\end{align*}
\end{theorem}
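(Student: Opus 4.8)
The plan is to reduce both identities to the regular-tetrahedron relations of Subsection~\ref{subsec:6.1}. From the coordinate list for $T_8(R)$ one reads off that $A_1,A_3,A_5,A_7$ are the four points $(\pm c,\pm c,\pm c)$ with an odd number of minus signs and $A_2,A_4,A_6,A_8$ are the four with an even number of minus signs; these are exactly the two vertex sets described in the statement, and, as noted just before it, each of them forms a regular tetrahedron. Moreover each of these tetrahedra has centroid at the origin $O$ (the centroid of the cube), and each of its vertices lies at distance $\sqrt{3}\,c=R$ from $O$. Thus each of $\{A_1,A_3,A_5,A_7\}$ and $\{A_2,A_4,A_6,A_8\}$ is a regular tetrahedron with the \emph{same} centre $O$ and the \emph{same} circumradius $R$ as the cube.

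With this in hand the proof is one line for each claim. Relation~\eqref{eq:*}, applied with $n=4$ to each of the two tetrahedra, gives $\sum_{1}^{4} d_{2k-1}^2=\sum_{1}^{4} d_{2k}^2=4(R^2+L^2)$. For the fourth powers, the computation of Subsection~\ref{subsec:6.1} shows that any regular tetrahedron of circumradius $R$ centred at $O$ satisfies $\sum_{1}^{4} d_i^4=4\big((R^2+L^2)^2+\frac{4}{3}R^2L^2\big)$ for every point $M$ at distance $L$ from $O$ --- the right-hand side depends only on $R$ and $L$ --- so this applies verbatim to each of the two tetrahedra.

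A self-contained alternative is a direct substitution: with $A_1=(-c,-c,-c)$, $A_3=(c,c,-c)$, $A_5=(c,-c,c)$, $A_7=(-c,c,c)$ one has $d_1^2=R^2+L^2+2c(x+y+z)$, $d_3^2=R^2+L^2-2c(x+y-z)$, $d_5^2=R^2+L^2-2c(x-y+z)$, $d_7^2=R^2+L^2+2c(x-y-z)$, and the four linear forms cancel on summation, giving $4(R^2+L^2)$; squaring and summing, the cross terms cancel likewise and $(x+y+z)^2+(x+y-z)^2+(x-y+z)^2+(x-y-z)^2=4(x^2+y^2+z^2)=4L^2$, so $\sum_{1}^{4} d_{2k-1}^4=4(R^2+L^2)^2+16c^2L^2=4\big((R^2+L^2)^2+\frac{4}{3}R^2L^2\big)$ using $R^2=3c^2$. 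The even-indexed quadruple follows by the substitution $c\mapsto-c$.

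There is no genuine analytic obstacle; the only point demanding attention is the combinatorial bookkeeping --- checking that the geometric description in the statement (vertices on parallel faces, endpoints of skew face diagonals) selects precisely the alternating set $\{A_1,A_3,A_5,A_7\}$ in the labelling fixed by the coordinate list, and reading the $\pm/\mp$ conventions consistently so the linear parts actually sum to zero. Once the two inscribed regular tetrahedra are recognised as copies of $T_4(R)$ with common centre $O$ and circumradius $R$, the theorem is immediate from \eqref{eq:*} and Subsection~\ref{subsec:6.1}.
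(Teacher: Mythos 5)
Your proposal is correct and follows essentially the same route as the paper, which simply observes that the odd- and even-indexed vertices form two regular tetrahedra inscribed in the cube (with the same centroid $O$ and circumradius $R$) and then invokes the relations already established for $T_4(R)$ in Subsection~\ref{subsec:6.1}. Your added direct coordinate verification is a harmless bonus confirming the same computation.
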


\begin{remark}\label{rem:6.2}
These quadruples do not contain the distances to diametrically opposed vertices.
\end{remark}

Thus,
\allowdisplaybreaks
\begin{align*}
    \sum_{1}^8 d_i^4 & =8\Big((R^2+L^2)^2+\frac{4}{3}\,R^2L^2\Big). \\[0.2cm]
    \sum_{1}^8 d_i^6 & =\big(R^2+L^2\pm 2c(x+y+z)\big)^3+\big(R^2+L^2\mp 2c(x+y-z)\big)^3 \\[0.15cm]
    &\qquad\quad +\big(R^2+L^2\mp 2c(x+z-y)\big)^3+\big(R^2+L^2\mp 2c(y+z-x)\big)^3 \\[0.2cm]
    & =8(R^2+L^2)^3+24(R^2+L^2)c^2\Big((x+y+z)^2 \\[0.2cm]
    &\qquad\quad\; +(x+y-z)^2+(x-y+z)^2+(x-y-z)^2\Big) \\[0.2cm]
    & =8\big((R^2+L^2)^3+4R^2L^2(R^2+L^2)\big).
\end{align*}

If for $T_6(R)$ and $T_8(R)$ is satisfied
$$  \sum_{i=1}^n d_i^{2m}>nR^{2m}, \;\; n=6,8;       $$
then

\begin{theorem}\label{th:6.3}
The locus of points in the space such that the sum of the sixth (fourth) power of distances to the vertices of a given octahedron (cube) is constant is a sphere whose center is the centroid of the octahedron (cube).
\end{theorem}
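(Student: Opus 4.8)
The plan is to deduce Theorem~\ref{th:6.3} from the closed-form expressions for $\sum d_i^4$ and $\sum d_i^6$ displayed just above, in exactly the way Theorem~\ref{th:3.2} was obtained from Theorem~\ref{th:3.1} in the planar case. First I would fix the circumradius $R$ and regard each relevant sum as a function of the single nonnegative variable $t=L^2=x^2+y^2+z^2$. For the octahedron $T_6(R)$ and the cube $T_8(R)$ the coordinate computations already carried out give, with $n=6$ and $n=8$ respectively,
\begin{align*}
    \sum_{i=1}^n d_i^4 & =n\Big((R^2+t)^2+\tfrac{4}{3}R^2t\Big)=n\Big(R^4+\tfrac{10}{3}R^2t+t^2\Big), \\
    \sum_{i=1}^n d_i^6 & =n\big((R^2+t)^3+4R^2t(R^2+t)\big)=n\big(R^6+7R^4t+7R^2t^2+t^3\big);
\end{align*}
for the cube the quartic identity is a consequence of the splitting of its vertices into two regular tetrahedra together with Theorem~\ref{th:6.2}, and the sextic identity was verified directly. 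The decisive point, visible on the right-hand sides, is that once $R$ is fixed the value of $\sum d_i^{2m}$ depends on the point $M$ only through its distance $L$ to the centroid $O$.

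Next I would run the elementary monotonicity argument. In each of the four cases the right-hand side is a polynomial $f(t)$ in $t=L^2$ whose coefficients are all strictly positive, so $f$ is continuous and strictly increasing on $[0,\infty)$ with $f(0)=nR^{2m}$ and $f(t)\to\infty$ as $t\to\infty$. Hence for any prescribed constant $c$ with $c>nR^{2m}$ there is a unique $t_0=L_0^2>0$ solving $f(t_0)=c$. Since $\sum d_i^{2m}=f(L^2)$ holds for \emph{every} point of space, the equation $\sum d_i^{2m}=c$ is satisfied precisely by the points with $L=L_0$, and that locus is by definition the sphere $S(O,L_0)$ centred at the centroid. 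This proves the assertion for the sixth power in the octahedron case and for the fourth (indeed also the sixth) power in the cube case; the two degenerate cases $c=nR^{2m}$ and $c<nR^{2m}$ are read off the same formulas, matching the Remark~\ref{rem:6.1}-type statements, so the hypothesis $\sum d_i^{2m}>nR^{2m}$ is exactly the condition for the locus to be a proper sphere.

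I do not expect a real obstacle here: all the substance sits in the cancellation of the $x$-, $y$-, $z$-terms in the coordinate computations, which is already done, and what remains is the monotonicity bookkeeping. The only step demanding a moment of care is checking that, after expansion in powers of $L^2$, the coefficients of $f(t)$ are strictly positive — this is what secures the uniqueness of $L_0$, hence that the locus is a single sphere rather than a union of concentric spheres — but this is immediate by inspection of the two displays above.
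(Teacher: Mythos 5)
Your proposal is correct and follows essentially the same route as the paper: the whole substance is the coordinate computation showing $\sum d_i^4$ and $\sum d_i^6$ for $T_6$ and $T_8$ depend only on $R$ and $L$, which the paper establishes in the displays preceding the theorem, after which the locus statement follows. Your explicit monotonicity argument in $t=L^2$ (positive coefficients, $f(0)=nR^{2m}$, strict increase) merely spells out the step the paper leaves implicit, and it is accurate, including the identification of the degenerate cases with Remark~\ref{rem:6.3}.
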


\begin{remark}\label{rem:6.3}
\ \ \
\begin{enumerate}
\item[-] If $\sum\limits_{1}^n d_i^{2m}=nR^{2m}$ the locus is the centroid.

\item[-] If $\sum\limits_{1}^n d_i^{2m}<nR^{2m}$ the locus is the empty set.
\end{enumerate}
\end{remark}

\subsection{Icosahedron and Dodecahedron}
\label{subsec:6.3}

The coordinates of the vertices of icosahedron $T_{12}(R)$:
\begin{gather*}
    A_{1,2}(0,\pm c,\pm c\varphi), \;\; A_{3,4}(0,\mp c,\pm c\varphi), \\[0.2cm]
    A_{5,6}(\pm c,\pm c\varphi,0), \;\; A_{7,8}(\pm c,\mp c\varphi,0), \\[0.2cm]
    A_{9,10}(\pm c\varphi,0,\pm c), \;\; A_{11,12}(\pm c\varphi,0,\mp c),
\end{gather*}
where $\varphi$ is the golden ratio $\varphi=\frac{1+\sqrt{5}}{2}$ and $R=c\sqrt{1+\varphi^2}\,$. For an arbitrary point   \linebreak   $P(d_1,d_2,\dots,d_{12},L)$:
\allowdisplaybreaks[0]
\begin{gather*}
    d_{1,2}^2=R^2+L^2\mp 2c(y+z\varphi), \quad d_{3,4}^2=R^2+L^2\pm 2c(y-z\varphi), \\[0.2cm]
    d_{5,6}^2=R^2+L^2\mp 2c(x+y\varphi), \quad d_{7,8}^2=R^2+L^2\mp 2c(x-y\varphi), \\[0.2cm]
    d_{9,10}^2=R^2+L^2\mp 2c(z+x\varphi), \quad d_{11,12}^2=R^2+L^2\pm 2c(z-x\varphi).
\end{gather*}
Then
\allowdisplaybreaks[0]
\begin{align*}
    \sum_{1}^{12} d_i^4 & =\sum_{1}^4 d_i^4+\sum_{5}^8 d_i^4+\sum_{9}^{12} d_i^4 \\[0.2cm]
    & =4(R^2+L^2)^2+16c^2(y^2+z^2\varphi^2)+4(R^2+L^2)^2+16c^2(x^2+y^2\varphi^2) \\[0.2cm]
    &\qquad\quad\;\, +4(R^2+L^2)^2+16c^2(z^2+x^2\varphi^2) \\[0.2cm]
    & =12(R^2+L^2)^2+16c^2(1+\varphi^2)(x^2+y^2+z^2) \\[0.2cm]
    & =12\Big((R^2+L^2)^2+\frac{4}{3}\,R^2L^2\Big). \\[0.4cm]
    \sum_{1}^{12} d_i^6 & =\sum_{1}^4 d_i^6+\sum_{5}^8 d_i^6+\sum_{9}^{12} d_i^6 \\[0.2cm]
    & =4(R^2+L^2)^3+48c^2(R^2+L^2)(y^2+z^2\varphi^2) \\[0.2cm]
    &\qquad\quad\;\, +4(R^2+L^2)^3+48c^2(R^2+L^2)(x^2+y^2\varphi^2) \\[0.2cm]
    &\qquad\quad\;\, +4(R^2+L^2)^3+48c^2(R^2+L^2)(z^2+x^2\varphi^2) \\[0.2cm]
    & =12\big((R^2+L^2)^3+4R^2L^2(R^2+L^2)\big).
\end{align*}
For the sum of the eighth power
\allowdisplaybreaks
\begin{align*}
    \sum_{1}^4 d_i^8 & =4(R^2+L^2)^4+96c^2(R^2+L^2)^2(y^2+z^2\varphi^2)+64c^4(y^4+z^4\varphi^4+6y^2z^2\varphi^2), \\[0.2cm]
    \sum_{1}^{12} d_i^8 & =12(R^2+L^2)^4+96c^2(R^2+L^2)^2(x^2+y^2+z^2)(1+\varphi^2) \\[0.2cm]
    &\qquad\qquad +64c^4\big((x^4+y^4+z^4)(1+\varphi^4)+6(x^2y^2+x^2z^2+y^2z^2)\varphi^2\big), \\[0.2cm]
\intertext{Because}
    &\qquad 1+\varphi^4=3\varphi^2 \;\;\text{and}\;\; \varphi^2=\frac{1}{5}\,(1+\varphi^2)^2, \\[0.2cm]
    \sum_{1}^{12} d_i^8 & =12\Big((R^2+L^2)^4+8R^2L^2(R^2+L^2)^2+\frac{16}{5}\,R^4L^4\Big). \\[0.2cm]
    \sum_{1}^{4} d_i^{10} & =4(R^2+L^2)^5+80(R^2+L^2)^3c^2\big((y+z\varphi)^2+(y-z\varphi)^2\big) \\[0.2cm]
    &\qquad\quad\;\, +160(R^2+L^2)c^4\big((y+z\varphi)^4+(y-z\varphi)^4\big) \\[0.2cm]
    & =4(R^2+L^2)^5+160(R^2+L^2)^3c^2(y^2+z^2\varphi^2) \\[0.2cm]
    &\qquad\quad\;\, +320(R^2+L^2)c^4(y^4+z^4\varphi^4+6y^2z^2\varphi^2). \\[0.2cm]
    \sum_{1}^{12} d_i^{10} & =12(R^2+L^2)^5+160(R^2+L^2)^3c^2(x^2+y^2+z^2)(1+\varphi^2) \\[0.2cm]
    &\qquad\qquad +320(R^2+L^2)c^4\big((1+\varphi^4)(x^4+y^4+z^4)+3\varphi^2(2x^2y^2+2x^2z^2+2y^2z^2)\big) \\[0.2cm]
    & =12\Big((R^2+L^2)^5+\frac{40}{3}\,R^2L^2(R^2+L^2)^3+16R^4L^4(R^2+L^2)\Big).
\end{align*}

Divide the vertices of the dodecahedron -- $T_{20}(R)$ into two groups, the vertices $A_1,A_2,\dots,A_8$ which form a cube and other vertices -- $A_9,A_{10},\dots,A_{20}$. Then the coordinates:
\begin{gather*}
    A_{1,2}(\mp c,\mp c,\mp c), \;\; A_{3,4}(\pm c,\pm c,\mp c), \\[0.2cm]
    A_{5,6}(\pm c,\mp c,\pm c), \;\; A_{7,8}(\mp c,\pm c,\pm c), \\[0.2cm]
    A_{9,10}\Big(0,\pm\frac{c}{\varphi},\pm c\varphi\Big), \;\; A_{11,12}\Big(0,\mp\frac{c}{\varphi},\pm c\varphi\Big), \\[0.2cm]
    A_{13,14}\Big(\pm\frac{c}{\varphi},\pm c\varphi,0\Big), \;\; A_{15,16}\Big(\mp\frac{c}{\varphi},\pm c\varphi,0\Big), \\[0.2cm]
    A_{17,18}\Big(\pm c\varphi,0,\pm\frac{c}{\varphi}\Big), \;\; A_{19,20}\Big(\pm c\varphi,0,\mp\frac{c}{\varphi}\Big).
\end{gather*}
and $R=\sqrt{3}\,c$.

Consider an arbitrary point $P(d_1,d_2,\dots,d_{20},L)$. For the distances $d_1,d_2,\dots,d_8$ we use the respective distances of the cube, and for others:
\allowdisplaybreaks
\begin{gather*}
    d_{9,10}^2=R^2+L^2\mp 2c\Big(\frac{y}{\varphi}+z\varphi\Big), \;\; d_{11,12}^2=R^2+L^2\pm 2c\Big(\frac{y}{\varphi}-z\varphi\Big), \\[0.2cm]
    d_{13,14}^2=R^2+L^2\mp 2c\Big(\frac{x}{\varphi}+y\varphi\Big), \;\; d_{15,16}^2=R^2+L^2\pm 2c\Big(\frac{x}{\varphi}-y\varphi\Big), \\[0.2cm]
    d_{17,18}^2=R^2+L^2\mp 2c\Big(\frac{z}{\varphi}+x\varphi\Big), \;\; d_{19,20}^2=R^2+L^2\pm 2c\Big(\frac{z}{\varphi}-x\varphi\Big),
\end{gather*} \\[-1cm]
\begin{align*}
    \sum_{1}^{20} d_i^4 & =8(R^2+L^2)^2+\frac{32}{3}\,R^2L^2+\sum_{9}^{20} d_i^4 \\[0.2cm]
    & =8(R^2+L^2)^2+\frac{32}{3}\,R^2L^2+12(R^2+L^2)^2+16c^2(x^2+y^2+z^2)\Big(\frac{1}{\varphi^2}+\varphi^2\Big) \\[0.2cm]
    & =20\Big((R^2+L^2)^2+\frac{4}{3}\,R^2L^2\Big). \\[0.2cm]
    \sum_{1}^{20} d_i^6 & =8(R^2+L^2)^3+32R^2L^2(R^2+L^2)+12(R^2+L^2)^3 \\[0.2cm]
    &\qquad\quad\;\, +3(R^2+L^2)16c^2(x^2+y^2+z^2)\Big(\frac{1}{\varphi^2}+\varphi^2\Big) \\[0.2cm]
    & =20\big((R^2+L^2)^3+4R^2L^2(R^2+L^2)\big). \\[0.2cm]
    \sum_{1}^8 d_i^8 & =\big(R^2+L^2\pm 2c(x+y+z)\big)^4+\big(R^2+L^2\mp 2c(x+y-z)\big)^4 \\[0.2cm]
    &\qquad +\big(R^2+L^2\mp 2c(x+z-y)\big)^4+\big(R^2+L^2\pm 2c(x-y-z)\big)^4 \\[0.2cm]
    & =8(R^2+L^2)^4+64R^2L^2(R^2+L^2)^2+\frac{64}{9}\,R^4(2L^4+8x^2y^2+8x^2z^2+8y^2z^2), \\[0.2cm]
    \sum_{9}^{20} d_i^8 & =12(R^2+L^2)^4+96(R^2+L^2)^2L^23c^2 \\[0.2cm]
    &\qquad +64c^4\Big((x^4+y^4+z^4)\Big(\varphi^4+\frac{1}{\varphi^4}\Big)+6x^2y^2+6x^2z^2+6y^2z^2\Big) \\[0.2cm]
    & =12(R^2+L^2)^4+96(R^2+L^2)^2R^2L^2 \\[0.2cm]
    &\qquad +\frac{64}{9}\,R^4\big(7(x^4+y^4+z^4)+6x^2y^2+6x^2z^2+6y^2z^2\big), \\[0.2cm]
    \sum_{1}^{20} d_i^8 & =20\Big((R^2+L^2)^4+8R^2L^2(R^2+L^2)^2+\frac{16}{5}\,R^4L^4\Big).
\end{align*}

Like $T_{12}$, maximal power for $T_{20}$ which depends on $R$ and $L$ only is 10. Indeed,
\begin{align*}
    \sum_{1}^8 d_i^{10} & =\big(R^2+L^2\pm 2c(x+y+z)\big)^5+\big(R^2+L^2\mp 2c(x+y-z)\big)^5 \\[0.2cm]
    &\qquad +\big(R^2+L^2\mp 2c(x+z-y)\big)^5+\big(R^2+L^2\pm 2c(x-y-z)\big)^5 \\[0.2cm]
    & =8(R^2+L^2)^5+320(R^2+L^2)^3c^2L^2 \\[0.2cm]
    &\qquad +320(R^2+L^2)c^4\big(2(x^4+y^4+z^4)+12(x^2y^2+x^2z^2+y^2z^2)\big), \\[0.2cm]
    \sum_{9}^{20} d_i^{10} & =12(R^2+L^2)^5+160(R^2+L^2)^3c^2L^2\Big(\frac{1}{\varphi^2}+\varphi^2\Big) \\[0.2cm]
    &\qquad +320(R^2+L^2)c^4\Big(\Big(\frac{1}{\varphi^4}+\varphi^4\Big)(x^4+y^4+z^4)+6(x^2y^2+x^2z^2+y^2z^2)\Big), \\[0.2cm]
    \sum_{1}^{20} d_i^{10} & =20\Big((R^2+L^2)^2+\frac{40}{3}\,R^2L^2(R^2+L^2)^3+16R^4L^4(R^2+L^2)\Big).
\end{align*}

If for $T_{12}(R)$ and $T_{20}(R)$ is satisfied
$$  \sum_{i=1}^n d_i^{2m}>nR^{2m}, \;\; n=12,20,        $$
then

\begin{theorem}\label{th:4.4}
The locus of points in the space such that the sum of the $2m$-th power of distances to the vertices of a given icosahedron (dodecahedron) is constant is a sphere, when
$$  m=1,2,3,4\;\text{and}\;5.       $$
The center of the sphere is the centroid of the icosahedron (dodecahedron).
\end{theorem}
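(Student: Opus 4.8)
The plan is to read off, from the computations already carried out above, closed forms for $\sum_{i=1}^n d_i^{2m}$ ($n=12$ for the icosahedron, $n=20$ for the dodecahedron) and to observe that for $m=1,\dots,5$ these depend on the point $M$ only through $R$ and $L=|MO|$. First I would record, for the icosahedron,
\begin{align*}
  \sum_1^{12} d_i^{2} &= 12(R^2+L^2), \\
  \sum_1^{12} d_i^{4} &= 12\Big((R^2+L^2)^2 + \tfrac{4}{3}R^2L^2\Big), \\
  \sum_1^{12} d_i^{6} &= 12\big((R^2+L^2)^3 + 4R^2L^2(R^2+L^2)\big), \\
  \sum_1^{12} d_i^{8} &= 12\Big((R^2+L^2)^4 + 8R^2L^2(R^2+L^2)^2 + \tfrac{16}{5}R^4L^4\Big), \\
  \sum_1^{12} d_i^{10} &= 12\Big((R^2+L^2)^5 + \tfrac{40}{3}R^2L^2(R^2+L^2)^3 + 16R^4L^4(R^2+L^2)\Big),
\end{align*}
with the identical right-hand sides, $12$ replaced by $20$, for the dodecahedron; so in every admissible case $\sum d_i^{2m}=n\,f_m(R^2,L^2)$ for an explicit polynomial $f_m$ all of whose coefficients are nonnegative.

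Next I would set $t=L^2$ and argue that, $R$ being a fixed positive number for a given solid, $t\mapsto f_m(R^2,t)$ is a polynomial with strictly positive coefficients, hence strictly increasing on $[0,\infty)$ with $f_m(R^2,0)=R^{2m}$. Therefore the equation $\sum d_i^{2m}=c$ is equivalent to $f_m(R^2,t)=c/n$, which has a unique nonnegative root $t=\ell^2$ exactly when $c\ge nR^{2m}$; under the stated hypothesis $c=\sum d_i^{2m}>nR^{2m}$ this root satisfies $\ell>0$, and the locus is precisely $\{M:|MO|=\ell\}=S(O,\ell)$, the sphere of radius $\ell$ about the centroid $O$. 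This settles the statement for $m=1,2,3,4,5$ simultaneously for both solids.

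The whole content thus reduces to the five displayed identities, which are exactly the computations preceding the statement; the step I expect to require real care is collapsing the direction‑dependent parts. For $m=1$ it is relation \eqref{eq:*}; for $m=2,3$ one expands the binomials $(R^2+L^2\pm\cdots)^2$, $(R^2+L^2\pm\cdots)^3$, uses that the vertices come in antipodal pairs so that odd powers of the linear forms cancel, and substitutes $x^2+y^2+z^2=L^2$; for $m=4,5$ one additionally needs the golden‑ratio identities $1+\varphi^4=3\varphi^2$ and $\varphi^2=\tfrac15(1+\varphi^2)^2$ (equivalently $\varphi^2+\varphi^{-2}=3$ and $\varphi^4+\varphi^{-4}=7$) to rewrite the quartic symmetric sums $\sum x^4$ and $\sum x^2y^2$ as a single multiple of $R^4L^4$. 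The main obstacle is purely the bookkeeping of these quartic symmetric functions once the coordinate lists of $T_{12}$ and $T_{20}$ are fixed as above; there is no conceptual difficulty, and we make no assertion about $m\ge 6$, for which the sums genuinely involve $x,y,z$ and the locus need not be a sphere.
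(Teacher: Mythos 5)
Your proposal is correct and follows essentially the same route as the paper: the explicit coordinate computations for $T_{12}$ and $T_{20}$ (antipodal cancellation of odd terms plus the golden-ratio identities $1+\varphi^4=3\varphi^2$, $\varphi^2=\frac15(1+\varphi^2)^2$) giving $\sum_{i=1}^n d_i^{2m}=n\,f_m(R^2,L^2)$ for $m=1,\dots,5$, from which the sphere conclusion follows. Your closing monotonicity argument in $t=L^2$ merely makes explicit the step the paper leaves implicit via the hypothesis $\sum d_i^{2m}>nR^{2m}$ and Remark \ref{rem:6.4}.
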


\begin{remark}\label{rem:6.4}
\ \ \
\begin{enumerate}
\item[-] If $\sum\limits_{i=1}^n d_i^{2m}=nR^{2m}$ the locus is the centroid.

\item[-] If $\sum\limits_{i=1}^n d_i^{2m}<nR^{2m}$ the locus is the empty set.
\end{enumerate}
\end{remark}

\bigskip
\section{Cyclic Averages of Platonic Solids}
\label{sec:7}

Summarize the obtained results, in terms of the cyclic averages:

\begin{theorem}\label{th:7.1}
The cyclic averages of the Platonic solids are the following:
\begin{align*}
    S_{[4]}^{(2)}=S_{[6]}^{(2)}=S_{[8]}^{(2)}=S_{[12]}^{(2)}=S_{[20]}^{(2)} & =R^2+L^2, \\[0.2cm]
    S_{[4]}^{(4)}=S_{[6]}^{(4)}=S_{[8]}^{(4)}=S_{[12]}^{(4)}=S_{[20]}^{(4)} & =(R^2+L^2)^2+\frac{4}{3}\,R^2L^2, \\[0.2cm]
    S_{[6]}^{(6)}=S_{[8]}^{(6)}=S_{[12]}^{(6)}=S_{[20]}^{(6)} & =(R^2+L^2)^3+4R^2L^2(R^2+L^2), \\[0.2cm]
    S_{[12]}^{(8)}=S_{[20]}^{(8)} & =(R^2+L^2)^4+8R^2L^2(R^2+L^2)^2+\frac{16}{5}\,R^4L^4, \\[0.2cm]
    S_{[12]}^{(10)}=S_{[20]}^{(10)} & =(R^2+L^2)^5+\frac{40}{3}\,R^2L^2(R^2+L^2)^3+16R^4L^4(R^2+L^2).
\end{align*}
\end{theorem}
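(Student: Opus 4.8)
The plan is simply to collect, in the language of Definition \ref{def:2.2}, the explicit Cartesian computations already carried out in Section \ref{sec:6}. Recall that by definition $S_{[n]}^{(2m)}=\frac{1}{n}\sum\nolimits_{[R,L]}^{(2m)}=\frac{1}{n}\sum_{i=1}^{n}d_i^{2m}$, which is meaningful precisely when the right-hand sum is independent of the position of $M$ on the sphere $S(O,L)$. Thus each asserted value is obtained by dividing the corresponding sum from Section \ref{sec:6} by the vertex count $n$, and the cross-solid equalities are read off by comparing right-hand sides.

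First I would dispose of the row $m=1$ at once: relation \eqref{eq:*} holds for every Platonic solid, so $\sum_{i=1}^{n}d_i^{2}=n(R^{2}+L^{2})$ and hence $S_{[n]}^{(2)}=R^{2}+L^{2}$ for $n=4,6,8,12,20$ simultaneously. For the row $m=2$ I would divide by $n$ the identity $\sum_{1}^{4}d_i^{4}=4\bigl((R^{2}+L^{2})^{2}+\tfrac{4}{3}R^{2}L^{2}\bigr)$ from Subsection \ref{subsec:6.1}, the analogous identities for the octahedron and the cube, and the expressions for $\sum_{1}^{12}d_i^{4}$ and $\sum_{1}^{20}d_i^{4}$ from Subsection \ref{subsec:6.3}; in every case the quotient is $(R^{2}+L^{2})^{2}+\tfrac{4}{3}R^{2}L^{2}$. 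The row $m=3$ is handled the same way, using $\sum_{1}^{6}d_i^{6}$ and $\sum_{1}^{8}d_i^{6}$ from the subsection on the octahedron and cube together with $\sum_{1}^{12}d_i^{6}$, $\sum_{1}^{20}d_i^{6}$ from Subsection \ref{subsec:6.3}; the tetrahedron does not appear because $\sum_{1}^{4}d_i^{6}$ still contains $x,y,z$, so it is not a $\sum_{[R,L]}$ sum. Finally, the rows $m=4$ and $m=5$ involve only the icosahedron and the dodecahedron --- for the cube $\sum_{1}^{8}d_i^{8}$ already carries $x^{2}y^{2}$-type terms, as the computation in Subsection \ref{subsec:6.3} shows --- and here I would quote directly the displayed expressions for $\sum_{1}^{12}d_i^{8}$, $\sum_{1}^{12}d_i^{10}$, $\sum_{1}^{20}d_i^{8}$, $\sum_{1}^{20}d_i^{10}$, dividing by $12$ and by $20$.

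The only genuinely substantive point --- and it is already carried out in Section \ref{sec:6} --- is the cancellation of the positional variables in the higher-power sums. For $m=1$ this is automatic from \eqref{eq:*}; for $m=2,3$ each expanded sum reduces to a multiple of $x^{2}+y^{2}+z^{2}=L^{2}$, where one also uses Theorem \ref{th:4.2-3} to split the vertex set of the cube, icosahedron and dodecahedron into symmetric sub-configurations. For $m=4,5$ on the twelve- and twenty-vertex solids the quartic and quintic expansions produce in addition the combinations $x^{4}+y^{4}+z^{4}$ and $x^{2}y^{2}+x^{2}z^{2}+y^{2}z^{2}$ with coefficients built from the golden ratio, and the reduction to a polynomial in $L^{2}$ alone hinges on the identities $1+\varphi^{4}=3\varphi^{2}$, $\varphi^{2}=\tfrac{1}{5}(1+\varphi^{2})^{2}$, $\varphi^{2}+\varphi^{-2}=3$ and $\varphi^{4}+\varphi^{-4}=7$. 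I expect the bookkeeping of these coefficient collapses to be the only place where care is needed; once it is granted, Theorem \ref{th:7.1} is a transcription of the formulas already established, and no further obstacle arises.
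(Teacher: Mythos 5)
Your proposal is correct and matches the paper's own treatment: the paper offers no separate argument for Theorem \ref{th:7.1}, introducing it with ``Summarize the obtained results,'' so its proof is exactly what you describe --- dividing the explicit coordinate computations of $\sum_{i=1}^{n} d_i^{2m}$ from Section \ref{sec:6} by the vertex count $n$ and comparing the resulting expressions in $R$ and $L$. Your supporting remarks (the exclusion of the tetrahedron beyond $m=2$, of the octahedron/cube beyond $m=3$, and the golden-ratio identities driving the coefficient collapses) are the same ingredients the paper uses.
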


Eliminate $L$ and $R$ from the relations, we obtain direct relations among the cyclic averages of the Platonic solids.

\begin{theorem}\label{th:7.2}
For each Platonic solid $(n=4,6,8,12,20)$:
$$  S_{[n]}^{(4)}+\frac{16}{9}\,R^4=\Big(S_{[n]}^{(2)}+\frac{2}{3}\,R^2\Big)^2.     $$
\end{theorem}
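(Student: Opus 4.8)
The plan is to derive the identity purely algebraically from Theorem~\ref{th:7.1}, with no further geometry. Theorem~\ref{th:7.1} already does the essential work: it establishes that the first two cyclic averages have the \emph{same} closed form
$$S_{[n]}^{(2)}=R^2+L^2,\qquad S_{[n]}^{(4)}=(R^2+L^2)^2+\tfrac{4}{3}R^2L^2$$
simultaneously for all five values $n=4,6,8,12,20$. So it suffices to eliminate $L$ between these two relations once and for all; the conclusion then holds uniformly for every Platonic solid, with no case-by-case check.

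First I would solve the first relation for $L^2$, namely $L^2=S_{[n]}^{(2)}-R^2$, and substitute it into the expression for $S_{[n]}^{(4)}$. Writing $s=S_{[n]}^{(2)}$ for brevity, this gives
$$S_{[n]}^{(4)}=s^2+\tfrac{4}{3}R^2\bigl(s-R^2\bigr)=s^2+\tfrac{4}{3}R^2s-\tfrac{4}{3}R^4.$$
Then I would expand the claimed right-hand side and complete the square:
$$\Bigl(s+\tfrac{2}{3}R^2\Bigr)^2-\tfrac{16}{9}R^4=s^2+\tfrac{4}{3}R^2s+\tfrac{4}{9}R^4-\tfrac{16}{9}R^4=s^2+\tfrac{4}{3}R^2s-\tfrac{4}{3}R^4,$$
which coincides termwise with the previous display. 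Rearranging yields $S_{[n]}^{(4)}+\tfrac{16}{9}R^4=\bigl(S_{[n]}^{(2)}+\tfrac{2}{3}R^2\bigr)^2$, as asserted.

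There is no real obstacle here: once Theorem~\ref{th:7.1} is in hand the statement is a one-line completion of the square, and the only point worth stressing is that the resulting relation is valid for all five Platonic solids precisely because the two input formulas in Theorem~\ref{th:7.1} are themselves $n$-independent. As a consistency check one may also verify the identity directly in the variables $R,L$: both sides expand to $\tfrac{25}{9}R^4+\tfrac{10}{3}R^2L^2+L^4$.
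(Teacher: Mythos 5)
Your proof is correct and is essentially the paper's own route: the paper obtains Theorem~\ref{th:7.2} precisely by eliminating $L$ from the two relations $S_{[n]}^{(2)}=R^2+L^2$ and $S_{[n]}^{(4)}=(R^2+L^2)^2+\frac{4}{3}R^2L^2$ of Theorem~\ref{th:7.1}, which is exactly your substitution $L^2=S_{[n]}^{(2)}-R^2$ followed by completing the square. The algebra and the consistency check in $R,L$ are both correct, so there is nothing to add.
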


This result for regular simplicial and regular polytopic distances is obtained in \cite{12} and \cite{13}, respectively.

\begin{theorem}\label{th:7.3}
For each Platonic solid, except the tetrahedron $(n=6,8,12,20)$:
\begin{align*}
    S_{[n]}^{(6)} & =S_{[n]}^{(2)}\big((S_{[n]}^{(2)}+2R^2)^2-8R^4\big), \\[0.2cm]
    S_{[n]}^{(6)} & =S_{[n]}^{(2)}\big(3S_{[n]}^{(4)}-2(S_{[n]}^{(2)})^2\big).
\end{align*}
\end{theorem}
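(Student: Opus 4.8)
The plan is to obtain both identities by straightforward substitution of the closed forms supplied by Theorem~\ref{th:7.1}, which hold precisely for $n=6,8,12,20$; the tetrahedron is excluded because $\sum_{[R,L]}^{(6)}$ does not exist for $T_4$, so $S_{[4]}^{(6)}$ is undefined. Abbreviate $A=R^2+L^2$. Then Theorem~\ref{th:7.1} gives, uniformly in $n$, the three relations $S_{[n]}^{(2)}=A$, $S_{[n]}^{(4)}=A^2+\tfrac{4}{3}R^2L^2$, and $S_{[n]}^{(6)}=A^3+4R^2L^2A$, and the task is to check that the two displayed right-hand sides reduce to the last of these.

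First I would verify the second identity, which is purely in terms of the cyclic averages and does not involve $R$ explicitly. Substituting, $S_{[n]}^{(2)}\big(3S_{[n]}^{(4)}-2(S_{[n]}^{(2)})^2\big)=A\big(3(A^2+\tfrac{4}{3}R^2L^2)-2A^2\big)=A\big(A^2+4R^2L^2\big)=A^3+4R^2L^2A=S_{[n]}^{(6)}$, as required; this also explains why the relation has the same shape as the polygon relation of Theorem~\ref{th:4.6}. For the first identity I would eliminate $L$ in favour of $R$ via $L^2=S_{[n]}^{(2)}-R^2=A-R^2$, hence $R^2L^2=R^2(A-R^2)$, and expand: $S_{[n]}^{(2)}\big((S_{[n]}^{(2)}+2R^2)^2-8R^4\big)=A\big((A+2R^2)^2-8R^4\big)=A\big(A^2+4AR^2-4R^4\big)=A^3+4AR^2(A-R^2)=A^3+4R^2L^2A=S_{[n]}^{(6)}$, completing the proof.

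There is no real obstacle here; the only point needing attention is the bookkeeping of the Platonic-solid coefficient $\tfrac{4}{3}$ in $S_{[n]}^{(4)}$ (equivalently the $4R^2L^2$, rather than the $6R^2L^2$ of the polygon case), since matching $A(A^2+4R^2L^2)=A((A+cR^2)^2-dR^4)$ with $R^2L^2=R^2(A-R^2)$ forces $c=2$ and $d=8$, whereas the same computation with $6R^2L^2$ would give $c=3$, $d=15$ as in Theorem~\ref{th:4.6}. One could alternatively deduce the first identity from the first line of the theorem together with Theorem~\ref{th:7.2}, but the direct substitution above is the cleanest route.
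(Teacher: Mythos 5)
Your proof is correct and follows essentially the same route as the paper, which obtains Theorem~\ref{th:7.3} by eliminating $L$ (via $L^2=S_{[n]}^{(2)}-R^2$) from the closed-form expressions of Theorem~\ref{th:7.1}; your algebraic verifications of both identities, including the role of the coefficient $\tfrac{4}{3}$ in $S_{[n]}^{(4)}$, match the intended derivation. Nothing further is needed.
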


\begin{theorem}\label{th:6.8}
For the icosahedron and the dodecahedron $(n=12,20)$:
\begin{align*}
    S_{[n]}^{(8)}-(S_{[n]}^{(2)})^4 & =8R^2(S_{[n]}^{(2)}-R^2)\Big((S_{[n]}^{(2)})^2+\frac{2}{5}\,R^2(S_{[n]}^{(2)}-R^2)\Big), \\[0.2cm]
    S_{[n]}^{(10)}-(S_{[n]}^{(2)})^5 & =8R^2S_{[n]}^{(2)}(S_{[n]}^{(2)}-R^2)\Big(\frac{5}{3}\,(S_{[n]}^{(2)})^2+2R^2(S_{[n]}^{(2)}-R^2)\Big), \\[0.2cm]
    S_{[n]}^{(8)} & =\frac{1}{5}\,\Big(9(S_{[n]}^{(4)})^2+12S_{[n]}^{(4)}(S_{[n]}^{(2)})^2-16(S_{[n]}^{(2)})^4\Big), \\[0.2cm]
    S_{[n]}^{(10)} & =S_{[n]}^{(2)}S_{[n]}^{(4)}\big(9S_{[n]}^{(4)}-8(S_{[n]}^{(2)})^2\big).
\end{align*}
\end{theorem}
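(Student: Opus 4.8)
The plan is to obtain all four identities by pure algebra from the closed forms for the cyclic averages recorded in Theorem~\ref{th:7.1}, which hold uniformly for $n=12$ and $n=20$. Abbreviate $A=R^2+L^2$ and $P=R^2L^2$; then Theorem~\ref{th:7.1} reads $S_{[n]}^{(2)}=A$, $S_{[n]}^{(4)}=A^2+\tfrac43P$, $S_{[n]}^{(8)}=A^4+8PA^2+\tfrac{16}{5}P^2$ and $S_{[n]}^{(10)}=A^5+\tfrac{40}{3}PA^3+16P^2A$. All of the asserted relations are polynomial identities in the single pair $(A,P)$, so it suffices to rewrite the right-hand sides back in terms of $(A,P)$ and check equality.

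For the first two displays I would keep $R$ and $L$ as the working variables. Since $S_{[n]}^{(2)}-R^2=L^2$, we have $P=R^2\big(S_{[n]}^{(2)}-R^2\big)$, and
$$  S_{[n]}^{(8)}-(S_{[n]}^{(2)})^4=8PA^2+\tfrac{16}{5}P^2=8P\Big(A^2+\tfrac25P\Big),  $$
$$  S_{[n]}^{(10)}-(S_{[n]}^{(2)})^5=\tfrac{40}{3}PA^3+16P^2A=8AP\Big(\tfrac53A^2+2P\Big).  $$
Substituting $A=S_{[n]}^{(2)}$ and $P=R^2\big(S_{[n]}^{(2)}-R^2\big)$ into these reproduces exactly the first two relations of the theorem.

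For the last two displays I would instead eliminate $R$ and $L$ in favour of $S_{[n]}^{(2)}$ and $S_{[n]}^{(4)}$. From $S_{[n]}^{(4)}=A^2+\tfrac43P$ one solves $P=\tfrac34\big(S_{[n]}^{(4)}-(S_{[n]}^{(2)})^2\big)$. Inserting this together with $A=S_{[n]}^{(2)}$ into $S_{[n]}^{(8)}=A^4+8PA^2+\tfrac{16}{5}P^2$ and collecting the powers of $S_{[n]}^{(2)}$ and $S_{[n]}^{(4)}$ yields $S_{[n]}^{(8)}=\tfrac15\big(9(S_{[n]}^{(4)})^2+12S_{[n]}^{(4)}(S_{[n]}^{(2)})^2-16(S_{[n]}^{(2)})^4\big)$; the same substitution in $S_{[n]}^{(10)}=A^5+\tfrac{40}{3}PA^3+16P^2A$ makes the fifth-power term cancel and leaves $S_{[n]}^{(10)}=S_{[n]}^{(2)}S_{[n]}^{(4)}\big(9S_{[n]}^{(4)}-8(S_{[n]}^{(2)})^2\big)$.

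There is no real conceptual obstacle: once Theorem~\ref{th:7.1} is granted, each identity is a one-line substitution. The only point that needs care is the bookkeeping of the fractional coefficients ($\tfrac{16}{5}$, $\tfrac{40}{3}$, $\tfrac34$) and the exact cancellation of the leading $A^4$ and $A^5$ terms, which is why I would run the two eliminations --- first to $(R,L)$-form, then to $(S_{[n]}^{(2)},S_{[n]}^{(4)})$-form --- as separate passes rather than attempting a single combined computation.
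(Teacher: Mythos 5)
Your proposal is correct and follows essentially the same route as the paper, which obtains Theorem~\ref{th:6.8} by eliminating $L$ (and then $R^2L^2$) from the closed forms of $S_{[n]}^{(2)},S_{[n]}^{(4)},S_{[n]}^{(8)},S_{[n]}^{(10)}$ in Theorem~\ref{th:7.1}; your substitutions $P=R^2\bigl(S_{[n]}^{(2)}-R^2\bigr)$ and $P=\tfrac34\bigl(S_{[n]}^{(4)}-(S_{[n]}^{(2)})^2\bigr)$ reproduce all four identities, including the cancellation of the $A^4$ and $A^5$ terms.
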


Like the plane cases, in some space cases we have ``additional'' relations. Each Platonic solid, except the tetrahedron satisfies Theorem \ref{th:4.2-3} and for the cube and the dodecahedron Theorem~\ref{th:6.2}.

For the radius of the circumscribed sphere and the distance between the point and the centroid:

\begin{theorem}\label{th:6.9}
For each Platonic solid $(n=4,6,8,12,20)$:
\begin{align*}
    R^2 & =\frac{1}{2}\,\Big(S_{[n]}^{(2)}\pm\sqrt{4(S_{[n]}^{(2)})^2-3S_{[n]}^{(4)}}\Big), \\[0.2cm]
    L^2 & =\frac{1}{2}\,\Big(S_{[n]}^{(2)}\mp\sqrt{4(S_{[n]}^{(2)})^2-3S_{[n]}^{(4)}}\Big).
\end{align*}
\end{theorem}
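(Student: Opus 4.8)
The plan is to invert the first two relations of Theorem~\ref{th:7.1}, which hold uniformly for all five Platonic solids. I will write $s=S_{[n]}^{(2)}$ and $p=S_{[n]}^{(4)}$ for brevity. Theorem~\ref{th:7.1} gives $s=R^2+L^2$ and $p=(R^2+L^2)^2+\tfrac43 R^2L^2$, whence
\[
    R^2+L^2=s,\qquad R^2L^2=\tfrac34\,(p-s^2).
\]
Thus $R^2$ and $L^2$ are exactly the two roots of the quadratic $t^2-st+\tfrac34(p-s^2)=0$.

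Next I would apply the quadratic formula, obtaining $t=\tfrac12\bigl(s\pm\sqrt{s^2-3(p-s^2)}\bigr)=\tfrac12\bigl(s\pm\sqrt{4s^2-3p}\bigr)$; substituting back $s=S_{[n]}^{(2)}$, $p=S_{[n]}^{(4)}$ and assigning the two roots to $R^2$ and $L^2$ produces the stated formulas. The paired signs $\pm/\mp$ simply record that, a priori, either of $R^2,L^2$ may be the larger root. To see that the square root is real I would observe that the discriminant of the quadratic in $t$ is $(R^2-L^2)^2\ge 0$; indeed
\[
    4(S_{[n]}^{(2)})^2-3S_{[n]}^{(4)}=4(R^2+L^2)^2-3\Bigl((R^2+L^2)^2+\tfrac43 R^2L^2\Bigr)=(R^2+L^2)^2-4R^2L^2=(R^2-L^2)^2.
\]

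I expect no genuine obstacle: the computation is the exact analogue of the planar elimination in Theorem~\ref{th:4.3}, only with the coefficient $\tfrac43$ — coming from the common Platonic value of $S_{[n]}^{(4)}$ — in place of the polygonal coefficient $2$. The single point worth emphasizing is that this inversion is legitimate \emph{simultaneously} for $n=4,6,8,12,20$ precisely because Theorem~\ref{th:7.1} supplies the \emph{same} pair of functions of $R$ and $L$ for every Platonic solid, so nothing beyond that theorem (and in particular no further coordinate computation) is required.
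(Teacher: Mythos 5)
Your proposal is correct and matches the paper's (implicit) argument: just as Theorem~\ref{th:4.3} is obtained from the first two relations of Theorem~\ref{th:3.1}, the paper derives Theorem~\ref{th:6.9} by inverting the first two relations of Theorem~\ref{th:7.1}, i.e.\ viewing $R^2$ and $L^2$ as the roots of the quadratic with sum $S_{[n]}^{(2)}$ and product $\tfrac34\bigl(S_{[n]}^{(4)}-(S_{[n]}^{(2)})^2\bigr)$. Your verification that the discriminant equals $(R^2-L^2)^2$ is a small but welcome addition.
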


The points on the circumscribed sphere satisfy
$$  4(S_{[n]}^{(2)})^2=3S_{[n]}^{(4)},        $$
so

\begin{theorem}\label{th:7.5}
For any point on the circumscribed sphere of each Platonic solid $(n\!=\!4,6,8,12,20)$:
$$  4\Big(\sum_{i=1}^n d_i^2\Big)^2=3n\sum_{i=1}^n d_i^4.       $$
\end{theorem}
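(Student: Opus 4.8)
The plan is to reduce the claimed identity to the vanishing of the discriminant already recorded in Theorem~\ref{th:6.9}, and then to re-express everything in terms of the raw power sums rather than the cyclic averages.

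First I would recall that, by Theorem~\ref{th:7.1}, for every Platonic solid $(n=4,6,8,12,20)$ the first two cyclic averages have the uniform closed forms
$$ S_{[n]}^{(2)}=R^2+L^2, \qquad S_{[n]}^{(4)}=(R^2+L^2)^2+\frac{4}{3}\,R^2L^2, $$
where $R$ is the circumradius and $L=|OM|$ is the distance from the centroid $O$ to the point $M$. By definition of the circumscribed sphere, $M$ lies on $S(O,R)$ precisely when $L=R$. Substituting $L^2=R^2$ gives $S_{[n]}^{(2)}=2R^2$ and $S_{[n]}^{(4)}=\frac{16}{3}\,R^4$, whence
$$ 4\big(S_{[n]}^{(2)}\big)^2=16R^4=3\,S_{[n]}^{(4)}. $$
Equivalently, $4(S_{[n]}^{(2)})^2-3S_{[n]}^{(4)}=0$ is exactly the condition under which the square roots in Theorem~\ref{th:6.9} collapse and force $L^2=R^2$; thus the relation $4(S_{[n]}^{(2)})^2=3S_{[n]}^{(4)}$ characterizes the points of the circumscribed sphere.

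It then remains only to translate this into the stated form. By Definition~\ref{def:2.2} and the normalization $S_{[n]}^{(2m)}=\frac{1}{n}\sum_{i=1}^n d_i^{2m}$, we have $S_{[n]}^{(2)}=\frac{1}{n}\sum_{i=1}^n d_i^2$ and $S_{[n]}^{(4)}=\frac{1}{n}\sum_{i=1}^n d_i^4$. Inserting these into $4(S_{[n]}^{(2)})^2=3S_{[n]}^{(4)}$ and multiplying through by $n^2$ yields
$$ 4\Big(\sum_{i=1}^n d_i^2\Big)^2=3n\sum_{i=1}^n d_i^4, $$
which is the assertion.

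There is essentially no obstacle left at this stage: the only genuine work is hidden inside Theorem~\ref{th:7.1}, whose proof requires the explicit vertex coordinates of each of the five solids and the verification that the coefficient $\frac{4}{3}$ in front of $R^2L^2$ is the \emph{same} for the tetrahedron, octahedron, cube, icosahedron and dodecahedron. Granting Theorems~\ref{th:7.1} and~\ref{th:6.9}, the present statement is a one-line substitution; the only point to keep consistent is the characterization $M\in S(O,R)\iff L=R$, which is immediate because $R$ is by definition the common distance from the centroid to every vertex.
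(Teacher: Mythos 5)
Your proposal is correct and follows the same route as the paper: substitute $L=R$ into the closed forms of $S_{[n]}^{(2)}$ and $S_{[n]}^{(4)}$ from Theorem~\ref{th:7.1} (equivalently, note that the discriminant $4(S_{[n]}^{(2)})^2-3S_{[n]}^{(4)}=(R^2-L^2)^2$ in Theorem~\ref{th:6.9} vanishes exactly on the circumscribed sphere), then clear denominators to pass from cyclic averages to the raw power sums. Nothing further is needed.
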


\bigskip
\section{Conclusion}
\label{sec:8}

In the present paper, we introduce the $\sum_{[R,L]}$ sums and define the cyclic averages of the regular polygons and the Platonic solids. We prove the main property of the cyclic averages -- the equality of them for various regular polygons and Platonic solids. By means of the cyclic averages the distances of an arbitrary point to the vertices of the regular polygons (the plane case) and the Platonic solids (the space case) are investigated. All cases of constant sum of like powers of the distances, when the locus is a circle (a sphere), are found. General metrical relations for regular polygons (Platonic solids), which were known in special cases only, are established. Rational distances problem solved for the $n=24$ case.

\bigskip
\subsection*{Acknowledgement}

The author would like to thank Georgia Young Scientists Union (GYSU). This research was funded by GYSU. Contract no.~2506-19.

\vskip+0.5cm

\noindent \textbf{Author's address:}

\medskip

\noindent Department of Mathematics, Georgian-American High School, 18 Chkondideli Str., Tbilisi 0180, Georgia.

\noindent E-mail: \textit{mathmamuka@gmail.com}

\end{document}